\newtheorem{theorem}{Theorem}[section]
\newtheorem{proposition}[theorem]{Proposition}
\theoremstyle{remark} \newtheorem{remark}[theorem]{Remark}
\newcommand\cis{\operatorname{cis}}
\begin{document}

\title[Non-Friedrichs Self-adjoint Extensions of the Laplacian in $\mathbb{R}^d$]{Non-Friedrichs Self-adjoint extensions of the Laplacian in $\mathbb{R}^d$}
\author{Paul Lin}
\address{Paul Lin, Department of Mathematics, Australian National University, 
ACT 0200 Australia
}
\email{Paul.Lin@anu.edu.au}

\keywords{Laplacian, non-Friedrichs self-adjoint extension, eigenvalue, resonance}

\begin{abstract}
This is an expository paper about self-adjoint extensions of the Laplacian on $\mathbb{R}^d$, initially defined on functions supported away from a point.
Let $L$ be the Laplacian $-\sum_{i=1}^d\frac{\partial^2}{\partial x_i^2}$ with domain $C_c^\infty(\mathbb{R}^d\backslash\{0\})$.
We determine all the self-adjoint extensions of $L$ by calculating the deficiency subspaces of the closure $\overline{L}$ of $L$. 
We give the details of the calculations here since, to my knowledge, they are nowhere else in the literature.
In the case $d=3$,  the self-adjoint extensions $L_\theta$ are parametrized by a circle $\theta\in[-\pi, \pi)$, 
or equivalently by the real line plus a point $\mu\in\mathbb{R}\cup\{\infty\}$. 
We show that the non-Friedrichs extensions have either a single eigenvalue or a single resonance.
This is of some interest since self-adjoint Schr{\"o}dinger operators 
$\Delta+V$ on $\mathbb{R}^d$ either have no resonance (if $V\equiv 0$) or infinitely many (if $V\not\equiv 0$), see \cite[Prop. 4.3]{RBM}.
For these operators $L^\mu$, the kernel of $(L^\mu-\lambda^2)^{-1}$ is calculated explicitly, see \cite{AGHH}. 
Finally we use this knowledge to study a wave equation involving $L^\mu$, 
and explicitly determine the wave kernel.
\end{abstract}

\maketitle

\section{The operator $L$ and its closure}

\noindent We work in the Hilbert space $L^2(\mathbb{R}^d)$, and start with the unbounded operator $L=-\sum_{i=1}^d\frac{\partial^2}{\partial x_i^2}$  
with domain $D(L)=C_c^\infty(\mathbb{R}^d\backslash\{0\})$. 
Note that $L$ is symmetric, which can be shown by integration by parts, 
but it is not self-adjoint as the domain of its adjoint  $D(L^*)$ contains $C_c^\infty(\mathbb{R}^d)$ which is strictly bigger than $D(L)$. 
Recall that the deficiency subspaces $\mathscr{K}_+$, $\mathscr{K}_-$ of $L$ are the 
null spaces of the operators $i-L^*$, $i+L^*$ respectively.
By applying von Neumann's theorem to $L$, 
we know that the two deficiency subspaces of $L$ have the same dimension hence $L$ has self-adjoint extensions. 
To find out how many self-adjoint extensions $L$ has and what they look like, we use the following proposition
due to von Neumann, for the proof see \cite[Sec. X.1]{RS}:

\begin{proposition}
Let $A$ be a closed symmetric operator with equal deficiency indices. Then 
there is a one-one correspondence between self-adjoint extensions of $A$ and unitary maps from $\mathscr{K}_+$ onto $\mathscr{K}_-$. 
If $U$ is such a unitary map, the corresponding self-adjoint extension $A_U$ has domain
\[D(A_U)=\{\varphi+\psi+U\psi:\varphi\in D(A),\psi\in\mathscr{K}_+\}\]
and 
\[A_U(\varphi+\psi+U\psi)=A\varphi+i\psi-iU\psi.\]
\end{proposition}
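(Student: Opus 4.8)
The tool I would reach for is the Cayley transform, which trades the additive structure of symmetric operators for the multiplicative structure of isometries. The plan is to build a chain of bijections, self-adjoint extensions of $A$ $\leftrightarrow$ unitary extensions of the Cayley transform of $A$ $\leftrightarrow$ unitaries $U:\mathscr{K}_+\to\mathscr{K}_-$, and then to read off the domain and action of $A_U$ by inverting the transform.

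First I would record the two facts that make the machine run. Since $A$ is symmetric, $\langle A\varphi,\varphi\rangle\in\R$, so the cross terms cancel and $\|(A\pm i)\varphi\|^2=\|A\varphi\|^2+\|\varphi\|^2$ for $\varphi\in D(A)$; thus $A\pm i$ are injective, and because $A$ is closed their ranges are closed. Combined with the convention-free identity $\Ran(A\pm i)^\perp=\ker(A^*\mp i)=\mathscr{K}_{\pm}$, this gives the orthogonal decompositions $H=\Ran(A+i)\oplus\mathscr{K}_+=\Ran(A-i)\oplus\mathscr{K}_-$ of the ambient Hilbert space $H$. I would then invoke von Neumann's first formula, the direct-sum decomposition $D(A^*)=D(A)\oplus\mathscr{K}_+\oplus\mathscr{K}_-$ (orthogonal in the graph inner product of $A^*$); its only consequence I need later is the directness, namely $D(A)\cap(\mathscr{K}_++\mathscr{K}_-)=\{0\}$, which is immediate from uniqueness of the decomposition.

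Next I would introduce the Cayley transform $V=(A-i)(A+i)^{-1}$, a linear isometry of $\Ran(A+i)$ onto $\Ran(A-i)$, and invoke the standard correspondence: symmetric extensions of $A$ correspond to isometric extensions of $V$, and such an extension is self-adjoint exactly when its Cayley transform is \emph{unitary}, i.e. defined on all of $H$ and onto. Since any unitary extension $\tilde V$ must agree with $V$ on $\Ran(A+i)$ and carry the complement $\mathscr{K}_+$ isometrically onto $\mathscr{K}_-$, these extensions are in bijection with unitaries $U:\mathscr{K}_+\to\mathscr{K}_-$; I attach to $U$ the extension $\tilde V\big((A+i)\varphi+\psi\big)=(A-i)\varphi-U\psi$, the sign being chosen to reproduce the stated formulas. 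To recover the operator I invert, $A_U=i(1+\tilde V)(1-\tilde V)^{-1}$ with $D(A_U)=\Ran(1-\tilde V)$. Computing $(1-\tilde V)\big((A+i)\varphi+\psi\big)=2i\varphi+\psi+U\psi$ and $(1+\tilde V)\big((A+i)\varphi+\psi\big)=2A\varphi+\psi-U\psi$ and absorbing the scalar $2i$ into $\varphi$ yields exactly the claimed domain $\{\varphi+\psi+U\psi\}$ and action $A_U(\varphi+\psi+U\psi)=A\varphi+i\psi-iU\psi$.

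The step I expect to be the main obstacle is verifying that $1-\tilde V$ is injective, so that the inverse Cayley transform, and hence $A_U$, is a well-defined densely defined operator rather than a mere relation. If $\tilde V g=g$ with $g=(A+i)\varphi+\psi$, then $2i\varphi=-(\psi+U\psi)\in\mathscr{K}_++\mathscr{K}_-$; since also $\varphi\in D(A)$, the directness from von Neumann's first formula forces $\varphi=0$, whence $\psi=-U\psi$ lies in $\mathscr{K}_+\cap\mathscr{K}_-=\{0\}$ and $g=0$. Density of $\Ran(1-\tilde V)$ follows by running the identical argument on the unitary $\tilde V^{-1}=\tilde V^*$, whose roles of $\mathscr{K}_+$ and $\mathscr{K}_-$ are simply interchanged. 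The remaining points, that $A_U$ is symmetric, that it is genuinely self-adjoint (here the surjectivity of $\tilde V$, not merely its isometry, is essential), that $A_U\supseteq A$, and that distinct $U$ give distinct extensions while every self-adjoint extension arises this way, are then routine consequences of the Cayley correspondence being a bijection.
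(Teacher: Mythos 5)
Your proposal is correct and follows essentially the same route as the paper's own source for this result: the paper does not prove the proposition itself but cites Reed--Simon, Sec.~X.1, where von Neumann's theorem is established by exactly this Cayley-transform argument (isometry $V=(A-i)(A+i)^{-1}$, unitary extensions $V\oplus(-U)$, inversion of the transform to read off domain and action). Your computations, including the sign choice $\psi\mapsto -U\psi$ needed to reproduce the stated formulas and the injectivity/density argument for $1-\tilde V$, are accurate.
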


\vspace{3mm}

\noindent The above proposition requires a closed symmetric operator.
To find the domain of $\overline{L}$ means to complete $C_c^\infty(\mathbb{R}^d\backslash\{0\})$
under the norm $||\cdot||+||L(\cdot)||$. 
(In this paper, the norm notation $||\cdot||$ always denotes the $L^2$-norm.)
This norm is equivalent to the $W^{2,2}$-norm hence we have 
$D(\overline{L})=W_0^{2,2}(\mathbb{R}^d\backslash\{0\})\subseteq W^{2,2}(\mathbb{R}^d)$.
The situation differs as dimension varies. We first treat the case $d>4$. 

\subsection{Case d$>$4}

\begin{proposition}
For $\mathbb{R}^d$ with $d>4$, we have 
\[D(\overline{L})=W^{2,2}(\mathbb{R}^d).\]
\end{proposition}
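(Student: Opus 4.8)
The plan is to prove the two inclusions separately. The inclusion $D(\overline{L}) \subseteq W^{2,2}(\mathbb{R}^d)$ is already recorded in the discussion preceding the statement: the graph norm $||\cdot|| + ||L(\cdot)||$ is equivalent to the $W^{2,2}$-norm, and $D(\overline{L})$ is by definition the completion of $C_c^\infty(\mathbb{R}^d\setminus\{0\})$ in this norm. So the entire content is the reverse inclusion $W^{2,2}(\mathbb{R}^d) \subseteq D(\overline{L})$, that is, the assertion that every $W^{2,2}$ function is a $W^{2,2}$-limit of test functions vanishing near the origin.

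Since $C_c^\infty(\mathbb{R}^d)$ is dense in $W^{2,2}(\mathbb{R}^d)$, I would reduce to showing that each $u \in C_c^\infty(\mathbb{R}^d)$ lies in the $W^{2,2}$-closure of $C_c^\infty(\mathbb{R}^d\setminus\{0\})$. I would fix a cutoff $\phi \in C_c^\infty(\mathbb{R}^d)$ with $\phi \equiv 1$ on $B(0,1)$, $\supp \phi \subseteq B(0,2)$ and $0 \le \phi \le 1$, and set $\phi_\varepsilon(x) = \phi(x/\varepsilon)$. Then $u_\varepsilon := (1-\phi_\varepsilon)u$ vanishes in a neighbourhood of $0$, so $u_\varepsilon \in C_c^\infty(\mathbb{R}^d\setminus\{0\})$, and it remains to establish that $u_\varepsilon \to u$ in $W^{2,2}$, equivalently that $||\phi_\varepsilon u||_{W^{2,2}} \to 0$ as $\varepsilon \to 0$.

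The heart of the argument will be the Leibniz expansion of the derivatives of $\phi_\varepsilon u$ together with the scaling relations $|\partial^k \phi_\varepsilon| \le C_k \varepsilon^{-k}$ and $\supp \phi_\varepsilon \subseteq B(0,2\varepsilon)$. Bounding $u$ and its derivatives by their sup-norms and changing variables $y = x/\varepsilon$, each term contributes a factor $\varepsilon^{d-2k}$ to the squared $L^2$-norm, where $k \le 2$ counts the number of derivatives falling on $\phi_\varepsilon$. Concretely, the $L^2$ piece scales like $\varepsilon^{d}$, the first-derivative pieces like $\varepsilon^{d}$ and $\varepsilon^{d-2}$, and the second-derivative pieces like $\varepsilon^{d}$, $\varepsilon^{d-2}$ and $\varepsilon^{d-4}$.

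The main obstacle, and the reason the hypothesis $d>4$ appears, is the worst term $u\,\partial^2\phi_\varepsilon$, whose squared norm is $\lesssim ||u||_\infty^2\,||\partial^2\phi||_{L^2}^2\,\varepsilon^{d-4}$. This tends to $0$ precisely when $d>4$; at $d=4$ it is merely bounded, which is why that dimension is borderline and must be treated separately (with a sharper, logarithmic, cutoff), and for $d=3$ point values of $W^{2,2}$ functions are controlled, so the removed point genuinely matters. All the remaining terms carry strictly higher powers of $\varepsilon$ and vanish automatically once $d>2$. Collecting these estimates yields $||\phi_\varepsilon u||_{W^{2,2}} \to 0$, and hence the desired density.
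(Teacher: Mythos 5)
Your proposal is correct and follows essentially the same route as the paper: both arguments reduce to density of $C_c^\infty(\mathbb{R}^d)$, multiply by a rescaled cutoff vanishing near the origin, and track the scaling of each Leibniz term, identifying the term where both derivatives fall on the cutoff (squared norm $\sim \varepsilon^{d-4}$) as the one that forces $d>4$. The only cosmetic difference is that you estimate the full $W^{2,2}$-norm while the paper estimates the equivalent graph norm $||\cdot||+||L(\cdot)||$, which produces the same three nontrivial terms.
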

\begin{proof}
From above, we already know that $D(\overline{L})\subseteq W^{2,2}(\mathbb{R}^d)$. 
Since $C_c^\infty(\mathbb{R}^d)$ is dense in $W^{2,2}(\mathbb{R}^d)$, 
we pick any $\psi\in C_c^\infty(\mathbb{R}^d)$, 
and approximate it with a sequence of functions in $C_c^\infty(\mathbb{R}^d\backslash\{0\})$ 
that converges to  $\psi$ under the $W^{2,2}$-norm.
Let $\varphi$ be a smooth function such that $\varphi=1$ on $B_{\frac{1}{2}}(0)$ and $\varphi=0$ outside $B_1(0)$. 
Then define $\varphi_\epsilon(x)=1-\varphi(\frac{x}{\epsilon})$. 
We now show that $\psi\varphi_\epsilon$ converges to $\psi$ under the $W^{2,2}$-norm when $\epsilon$ approaches $0$. 
Indeed, $\psi\varphi_\epsilon\in C_c^\infty(\mathbb{R}^d\backslash\{0\})$, and
\[||\psi\varphi_\epsilon-\psi||^2
=\int_{\mathbb{R}^d}|\psi(x)\varphi_(\frac{x}{\epsilon})|^2dx\leq M_1^2\int_{\mathbb{R}^d}|\varphi(\frac{x}{\epsilon})|^2dx
=\epsilon^d M_1^2||\varphi||^2\rightarrow 0, \mbox{ as }\epsilon\rightarrow 0,\]
where $M_1$ is the maximum value of $\psi$. Now consider $||L(\psi\varphi_\epsilon-\psi))||$, product rule creates three terms,
\[||(L\psi)(\varphi_\epsilon-1)||^2=\int_{\mathbb{R}^d}|(L\psi)(x)\varphi(\frac{x}{\epsilon})|^2dx
\leq M_2^2\int_{\mathbb{R}^d}|\varphi(\frac{x}{\epsilon})|^2dx
=\epsilon^dM_2^2||\varphi||^2\rightarrow 0,\mbox{ as }\epsilon\rightarrow 0,\]
where $M_2$ is the maximum value of $L\psi$. Since $d-2>0$,
\begin{equation*}
\begin{split}
||\nabla\psi\cdot\nabla(\varphi_\epsilon-1)||^2
\leq M_3^2\int_{\mathbb{R}^d}|\nabla\varphi(\frac{x}{\epsilon})|^2dx
=\epsilon^{-2}M_3^2&\int_{\mathbb{R}^d}|(\nabla\varphi)(\frac{x}{\epsilon})|^2dx\\
&=\epsilon^{d-2} M_3^2|||\nabla\varphi|||^2\rightarrow 0\mbox{ as }\epsilon\rightarrow 0,
\end{split}
\end{equation*}
the partial derivatives of $\psi$ take values smaller than $M_3$. At last, since $d-4>0$, 
\begin{equation*}
\begin{split}
||\psi L(\varphi_\epsilon-1)||^2\leq M_1^2\int_{\mathbb{R}^d}|L\varphi(\frac{x}{\epsilon})|^2dx
=M_1^2\epsilon^{-4}&\int_{\mathbb{R}^d}|(L\varphi)(\frac{x}{\epsilon})|^2dx\\
&=M_1^2\epsilon^{d-4}||L\varphi||^2\rightarrow 0\mbox{ as }\epsilon\rightarrow 0.
\end{split}
\end{equation*}
We have established in the case $d>4$, 
\[D(\overline{L})=W^{2,2}(\mathbb{R}^d).\]
\end{proof}

\subsection{Case d=4}

\noindent We can see for the case $d\leq4$, the above proof fails because $M_1^2\epsilon^{d-4}||L\varphi||^2$ doesn't converge to $0$. 
In fact, when $d<4$, the domain of $\overline{L}$ is smaller than $W^{2,2}(\mathbb{R}^d)$. 
For the case $d=4$, we still have $D(\overline{L})=W^{2,2}(\mathbb{R}^4)$, but we can't use the same $\varphi_\epsilon$ as defined in the above proof 
because $||L(\varphi_\epsilon-1)||=||L\varphi(\frac{x}{\epsilon})||$ is independent of $\epsilon$. So to get convergence 
we need to define $\varphi_\epsilon$ in a way that breaks the scaling invariance of $||L(\varphi_\epsilon-1)||$.

\begin{proposition}
For $\mathbb{R}^4$, we have 
\[D(\overline{L})=W^{2,2}(\mathbb{R}^4).\]
\end{proposition}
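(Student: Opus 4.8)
The plan is to run the same approximation scheme as in the case $d>4$, but to replace the single‑scale cutoff $\varphi_\epsilon(x)=1-\varphi(x/\epsilon)$ by one that is spread logarithmically over $\sim n$ dyadic scales, precisely in order to defeat the scale invariance of $\|\Delta\varphi_\epsilon\|_{L^2(\mathbb{R}^4)}$ pointed out in the discussion above. Since we already know $D(\overline L)\subseteq W^{2,2}(\mathbb{R}^4)$ and $C_c^\infty(\mathbb{R}^4)$ is dense in $W^{2,2}(\mathbb{R}^4)$, it suffices to show that an arbitrary $\psi\in C_c^\infty(\mathbb{R}^4)$ is a $W^{2,2}$‑limit of functions in $C_c^\infty(\mathbb{R}^4\setminus\{0\})$. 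I would again take $\psi\eta_n$ with $\eta_n$ radial, smooth, equal to $0$ near the origin and to $1$ outside a small ball, so that $\psi\eta_n\in C_c^\infty(\mathbb{R}^4\setminus\{0\})$. Using the equivalence of the $W^{2,2}$‑norm with $\|\cdot\|+\|L(\cdot)\|$, convergence reduces to $\|\psi(\eta_n-1)\|\to0$ together with the three product‑rule terms of $\|L(\psi\eta_n-\psi)\|$, namely $\|(L\psi)(\eta_n-1)\|$, $\|\nabla\psi\cdot\nabla\eta_n\|$ and $\|\psi\,\Delta\eta_n\|$. The first two of these are controlled exactly as in the $d>4$ proof (they only see $\eta_n-1$ on a shrinking ball, with bounded coefficient), so the entire difficulty sits in the last term.

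To attack that term I would pass to logarithmic coordinates, writing $\eta_n(x)=g_n(\log|x|)$. A direct computation of the radial Laplacian in dimension $4$ gives $\Delta\eta_n=\bigl(g_n''+2g_n'\bigr)(\log|x|)/|x|^2$, and the substitution $s=\log r$, $dr=e^{s}\,ds$ cancels the weight completely:
\[
\int_{\mathbb{R}^4}|\Delta\eta_n|^2\,dx=\omega_3\int_{-\infty}^{0}\bigl(g_n''(s)+2g_n'(s)\bigr)^2\,ds,
\]
where $\omega_3$ is the surface measure of the unit sphere $S^3$. This is exactly the scale‑invariant quantity that the earlier discussion warns about, now displayed as a one‑dimensional integral carrying no decaying weight; for a single‑scale bump $g_n$ it is a fixed positive constant, which is precisely why the $d>4$ argument breaks down at $d=4$.

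The point is then to choose $g_n$ rising from $0$ to $1$ \emph{linearly} over a long interval: take $g_n(s)=(s+2n)/n$ on $[-2n,-n]$ (smoothed over a fixed‑length region at each corner), with $g_n\equiv0$ for $s\le-2n$ and $g_n\equiv1$ for $s\ge-n$; in the original variable $\eta_n=0$ on $B_{e^{-2n}}(0)$ and $\eta_n=1$ outside $B_{e^{-n}}(0)$. On the transition interval $g_n'=1/n$ and $g_n''=0$, so $g_n''+2g_n'=2/n$ there and the integral above equals $\omega_3(2/n)^2\cdot n=4\omega_3/n\to0$, the corner‑smoothing contributing only $O(n^{-2})$. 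The remaining estimates are immediate: $\|\psi(\eta_n-1)\|^2\le M_1^2\,\mathrm{vol}(B_{e^{-n}})\to0$, the term $\|(L\psi)(\eta_n-1)\|$ vanishes in the same way, and the same log substitution yields $\|\nabla\eta_n\|_{L^2}^2=\omega_3\int(g_n')^2e^{2s}\,ds\le\omega_3 e^{-2n}/n\to0$, so $\|\nabla\psi\cdot\nabla\eta_n\|\to0$ because $\nabla\psi$ is bounded. Hence $\psi\eta_n\to\psi$ in $W^{2,2}$ and $D(\overline L)=W^{2,2}(\mathbb{R}^4)$.

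I expect the only real obstacle to be the design of the profile $g_n$: one must break scale invariance by spreading the transition across $\sim n$ decades while keeping $g_n''+2g_n'$ small in $L^2(ds)$, and one must check that smoothing the two corners does not reintroduce an $O(1)$ contribution to $\int(g_n'')^2\,ds$. Once the logarithmic cutoff is in place and the identity for $\int|\Delta\eta_n|^2\,dx$ is established, everything else is routine.
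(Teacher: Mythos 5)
Your proof is correct and is essentially the paper's own argument: the paper likewise defeats the scale invariance by spreading the cutoff logarithmically over many scales, taking $\varphi_\epsilon=1-\varphi\big((|x|/\epsilon)^\epsilon\big)$, which in the variable $s=\log|x|$ is exactly a smooth transition of slope $O(\epsilon)$ over an interval of length $(\log 2)/\epsilon$, i.e.\ your $g_n$ with $n\sim 1/\epsilon$. The only difference is bookkeeping: you evaluate $\|\Delta\eta_n\|_{L^2}^2$ exactly via the substitution $s=\log r$ and a piecewise-linear profile, getting $O(1/n)$, whereas the paper estimates the analogous worst term directly in polar coordinates and finds the matching rate $\epsilon/2$.
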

\begin{proof}
As before, we pick any arbitrary $\psi\in C_c^\infty(\mathbb{R}^4)$. 
Let $\varphi:[0,\infty)\rightarrow[0,1]$ be a smooth function such that such that $\varphi\big([0,\frac{1}{2}]\big)=1$ and $\varphi\big([1,\infty)\big)=0$. 
Then define $\varphi_\epsilon(x):=1-\varphi\big((\frac{|x|}{\epsilon})^{\epsilon}\big)$.
We now verify that as $\epsilon\rightarrow 0$, $\psi\varphi_\epsilon$ approximates $\psi$ under the $W^{2,2}$-norm.
Indeed, $\psi\varphi_\epsilon\in C_c^\infty(\mathbb{R}^4\backslash\{0\})$, and
\[||\psi\varphi_\epsilon-\psi||^2=\int_{\mathbb{R}^4}|\psi(x)\varphi\big((\frac{|x|}{\epsilon})^{\epsilon}\big)|^2dx.\]
We show that the above norm tends to $0$ when $n$ becomes large.
Consider any $x$, when $\epsilon\leq |x|$, we have $(\frac{|x|}{\epsilon})^{\epsilon}\geq 1$, 
hence $|\psi(x)\varphi\big((\frac{|x|}{\epsilon})^{\epsilon}\big)|^2\rightarrow 0$ pointwise. 
Moreover, $|\psi(x)\varphi\big((\frac{|x|}{\epsilon})^{\epsilon}\big)|^2$ is bounded above by the $L^1$ function $|\psi|^2$, so we 
can conclude  $||\psi\varphi_\epsilon-\psi||\rightarrow 0$ as $\epsilon\rightarrow 0$.

\vspace{2mm}

\noindent Again as before, the term $||L(\psi\varphi_\epsilon-\psi)||$ produces three terms: 
$||(L\psi)(\varphi_\epsilon-1)||$, $||\nabla\psi\cdot\nabla(\varphi_\epsilon-1)||$ 
and $||\psi L(\varphi_\epsilon-1)||$.  
Here we just show how to deal with the most difficult term $||\psi L(\varphi_\epsilon-1)||$
where both derivatives fall on the cutoff function, as the computations for the others are similar, but easier. 
We will use the full strength of $d=4$. 
As before, we just need to estimate $||L(\varphi_\epsilon-1)||$. 
We will first calculate this norm for general $1\leq d\leq 4$, then substitute the dimension $d=4$, 
\[||L(\varphi_\epsilon-1)||^2=\int_{\mathbb{R}^d}|L\varphi\big((\frac{|x|}{\epsilon})^\epsilon\big)|^2dx.\]
After computation, we obtain the expression
\begin{equation}
\begin{split}
L\varphi\big((\frac{|x|}{\epsilon})^\epsilon\big)=
(d-2)&\epsilon^{1-\epsilon}|x|^{\epsilon-2}\varphi'\big((\frac{|x|}{\epsilon})^\epsilon\big)\\
&+\epsilon^{2-2\epsilon}|x|^{2\epsilon-2}\varphi''\big((\frac{|x|}{\epsilon})^\epsilon\big)+
\epsilon^{2-\epsilon}|x|^{\epsilon-2}\varphi'\big((\frac{|x|}{\epsilon})^\epsilon\big).
\end{split}
\label{central}
\end{equation}
We will use it in the proofs for all the dimensions $1\leq d\leq 4$. In particular here for $d=4$, it becomes
\[L\varphi\big((\frac{|x|}{\epsilon})^\epsilon\big)=
2\epsilon^{1-\epsilon}|x|^{\epsilon-2}\varphi'\big((\frac{|x|}{\epsilon})^\epsilon\big)+
\epsilon^{2-2\epsilon}|x|^{2\epsilon-2}\varphi''\big((\frac{|x|}{\epsilon})^\epsilon\big)+
\epsilon^{2-\epsilon}|x|^{\epsilon-2}\varphi'\big((\frac{|x|}{\epsilon})^\epsilon\big).\]
Because both $\varphi'$ and $\varphi''$ are bounded, we just need to work with the three terms 
$\epsilon^{1-\epsilon}|x|^{\epsilon-2}$, $\epsilon^{2-2\epsilon}|x|^{2\epsilon-2}$, $\epsilon^{2-\epsilon}|x|^{\epsilon-2}$. 
Because $\varphi'$ and $\varphi''$ vanish outside the ball $B(0,\epsilon)$, we only integrate over the ball $B(0,\epsilon)$. 
The square of these terms are $\epsilon^{2-2\epsilon}|x|^{2\epsilon-4}, \epsilon^{4-4\epsilon}|x|^{4\epsilon-4}$, $\epsilon^{4-2\epsilon}|x|^{2\epsilon-4}$. 
The biggest is $\epsilon^{2-2\epsilon}|x|^{2\epsilon-4}$, so we only need to estimate this one. 
Using polar coordinates,
\[\int_{B(0,\epsilon)}\epsilon^{2-2\epsilon}|x|^{2\epsilon-4}dx=\epsilon^{2-2\epsilon}\int_0^\epsilon r^{2\epsilon-4}r^3dr=
\epsilon^{2-2\epsilon}\int_0^\epsilon r^{2\epsilon-1}dr=\frac{\epsilon}{2}\rightarrow 0\mbox{ as }\epsilon\rightarrow 0.\]
This completes the proof.
\end{proof}

\begin{remark}
Note that this convergence is only logarithmic as a function of the `spread' of $\varphi\big((\frac{|x|}{\epsilon})^\epsilon\big)$, 
as one would expect.
\end{remark}

\subsection{Case d=3}

\noindent We already know the domain 
of $\overline{L}$ lies in $W^{2,2}(\mathbb{R}^3)$. 
By Sobolev Embedding Theorem we know that $W^{2,2}(\mathbb{R}^3)$ can be embedded into $C^{\frac{1}{2}}(\mathbb{R}^3)$, 
therefore it makes sense to talk about the value of one of these functions at a single point, in this case the origin. 
Any function in $D(\overline{L})$ is the limit of a sequence 
of continuous functions that take $0$ at the origin under the $W^{2,2}$-norm, hence it must also 
take $0$ at the origin. Therefore we know, $D(\overline{L})\subseteq\{\psi\in W^{2,2}(\mathbb{R}^3)|\psi(0)=0\}$. 
In fact, they are equal. 

\begin{proposition}
In $\mathbb{R}^3$, we have 
\[D(\overline{L})=\{\psi\in W^{2,2}(\mathbb{R}^3)|\psi(0)=0\}.\]
\end{proposition}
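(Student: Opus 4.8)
The plan is to establish the reverse inclusion $\{\psi\in W^{2,2}(\mathbb{R}^3):\psi(0)=0\}\subseteq D(\overline{L})$, which together with the inclusion already proved just before the statement yields equality. Write $X=\{\psi\in W^{2,2}(\mathbb{R}^3):\psi(0)=0\}$. Since the norm $\|\cdot\|+\|L(\cdot)\|$ is equivalent to the $W^{2,2}$-norm, $D(\overline{L})$ is exactly the $W^{2,2}$-closure of $C_c^\infty(\mathbb{R}^3\setminus\{0\})$, hence a closed subspace of $W^{2,2}$; and $X$ is closed because evaluation at the origin is continuous on $W^{2,2}(\mathbb{R}^3)$ by the embedding $W^{2,2}\hookrightarrow C^{1/2}$. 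It therefore suffices to approximate, in the $W^{2,2}$-norm, each $\psi\in X$ by functions in $C_c^\infty(\mathbb{R}^3\setminus\{0\})$.

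The approximation scheme is the cut-off idea already used for $d=4$: set $\varphi_\epsilon(x)=1-\varphi\big((|x|/\epsilon)^\epsilon\big)$ so that $\psi\varphi_\epsilon\in C_c^\infty(\mathbb{R}^3\setminus\{0\})$, and show $\psi\varphi_\epsilon\to\psi$ in $W^{2,2}$ as $\epsilon\to0$. The zeroth-order term $\|\psi\varphi_\epsilon-\psi\|$ tends to $0$ by dominated convergence exactly as before. For $\|L(\psi\varphi_\epsilon-\psi)\|$ the product rule again produces three pieces, $\|(L\psi)(\varphi_\epsilon-1)\|$, $\|\nabla\psi\cdot\nabla(\varphi_\epsilon-1)\|$ and $\|\psi L(\varphi_\epsilon-1)\|$. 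The first vanishes by dominated convergence, since $\varphi_\epsilon-1\to0$ pointwise and is uniformly bounded while $L\psi\in L^2$; the second is controlled using $|\nabla(\varphi_\epsilon-1)|\le C\,\epsilon^{1-\epsilon}|x|^{\epsilon-1}\mathbf{1}_{B(0,\epsilon)}$ together with $\nabla\psi\in L^6(\mathbb{R}^3)$ (Sobolev) and Hölder's inequality.

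The term $\|\psi L(\varphi_\epsilon-1)\|$, where both derivatives land on the cut-off, is the main obstacle, and it is precisely here that the hypothesis $\psi(0)=0$ enters. In the higher-dimensional proofs this term was bounded by $\sup|\psi|$ times $\|L(\varphi_\epsilon-1)\|$, but in $d=3$ the dominant square term coming from \eqref{central}, namely $\epsilon^{2-2\epsilon}|x|^{2\epsilon-4}$, integrates to $\epsilon^{2-2\epsilon}\int_0^\epsilon r^{2\epsilon-2}\,dr$, which diverges at the origin. The remedy is that $\psi(0)=0$ forces decay: by the embedding $\psi\in C^{1/2}$, so $|\psi(x)|^2\le C|x|$ near $0$. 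Inserting this extra factor restores convergence,
\[\int_{B(0,\epsilon)}|x|\,\epsilon^{2-2\epsilon}|x|^{2\epsilon-4}\,dx=\epsilon^{2-2\epsilon}\int_0^\epsilon r^{2\epsilon-1}\,dr=\frac{\epsilon}{2}\longrightarrow0,\]
and the other two square terms from \eqref{central} are smaller by the same computation. This is the crux: the single extra power of $|x|^{1/2}$ supplied by $\psi(0)=0$ is exactly what the scaling-breaking cut-off $(|x|/\epsilon)^\epsilon$ needs in order to beat the borderline divergence.

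Finally, I would record a cleaner variant that sidesteps the Hölder estimate on the gradient term. Since the $C_c^\infty$ functions vanishing at the origin are $W^{2,2}$-dense in $X$ — approximate $\psi$ by $f_n\in C_c^\infty$ in $W^{2,2}$ and replace $f_n$ by $f_n-f_n(0)\chi$ for a fixed $\chi\in C_c^\infty$ with $\chi(0)=1$, noting $f_n(0)\to\psi(0)=0$ — one may assume $\psi$ smooth at the outset. Then $|\psi(x)|\le C|x|$ and $\nabla\psi$ is bounded, so all three pieces tend to $0$ even with the simpler cut-off $1-\varphi(x/\epsilon)$; the scaling-breaking cut-off is needed only if one insists on working directly with the weaker Hölder bound $|\psi(x)|\le C|x|^{1/2}$.
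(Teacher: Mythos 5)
Your core argument coincides with the paper's: the same scaling-broken cutoff $\varphi_\epsilon(x)=1-\varphi\big((|x|/\epsilon)^\epsilon\big)$, the same splitting of $\|L(\psi\varphi_\epsilon-\psi)\|$ into three terms, and the same crucial step where $\psi(0)=0$ plus the $C^{1/2}$ embedding turns the divergent integral $\epsilon^{2-2\epsilon}\int_0^\epsilon r^{2\epsilon-2}\,dr$ into $\epsilon^{2-2\epsilon}\int_0^\epsilon r^{2\epsilon-1}\,dr=\epsilon/2$; that computation is identical to the paper's. The differences are in the surrounding bookkeeping, and they cut both ways. The paper runs the cutoff argument only for $\psi\in C_c^\infty(\mathbb{R}^3)$ with $\psi(0)=0$ (so $\nabla\psi$ is bounded and $\psi\varphi_\epsilon$ genuinely lies in $C_c^\infty(\mathbb{R}^3\setminus\{0\})$), silently relying on the density of such $\psi$ in $X=\{\psi\in W^{2,2}(\mathbb{R}^3):\psi(0)=0\}$. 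You instead take an arbitrary $\psi\in X$, which is why you need the $L^6$--H\"older estimate for the gradient term; but then your claim that $\psi\varphi_\epsilon\in C_c^\infty(\mathbb{R}^3\setminus\{0\})$ is not literally true --- it is only a $W^{2,2}$ function vanishing near the origin, and an extra mollification-and-truncation step is needed to conclude that it lies in $D(\overline{L})$. Your final paragraph repairs both issues at once: the lemma that $C_c^\infty$ functions vanishing at $0$ are dense in $X$ (via $f_n\mapsto f_n-f_n(0)\chi$) is precisely the statement the paper leaves implicit, and after this reduction $\psi$ is Lipschitz at the origin, $|\psi(x)|\le C|x|$, so even the naive cutoff $1-\varphi(x/\epsilon)$ suffices, since $\epsilon^{-4}\int_0^\epsilon r^2\cdot r^2\,dr\sim\epsilon\to 0$, whereas the H\"older bound $|\psi(x)|\le C|x|^{1/2}$ alone gives only the borderline $\epsilon^{-4}\int_0^\epsilon r\cdot r^2\,dr\sim 1$. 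That closing variant --- density reduction followed by the simple cutoff --- is a complete and cleaner proof, genuinely different in structure from the paper's, and it isolates exactly when the exotic cutoff $(|x|/\epsilon)^\epsilon$ is needed and when it is not.
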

\begin{proof}
We pick any arbitrary $\psi\in C_c^\infty(\mathbb{R}^3)$  with $\psi(0)=0$. We use the same 
sequence as the case $d=4$ to approximate $\psi$. The calculations are similar as well. The difference here is we get
one less power of $r$ when we change into polar coordinates, but this is compensated with the H{\"o}lder condition. 
We show $||\psi\varphi_\epsilon-\psi||, ||(L\psi)(\varphi_\epsilon-1)||, ||\nabla\psi\cdot\nabla(\varphi_\epsilon-1)||\rightarrow 0$ 
as $\epsilon\rightarrow 0$ exactly the same as before.
The only substantially different term is $||\psi L(\varphi_\epsilon-1)||$. By the H{\"o}lder condition, we have 
$|\psi(x)|^2\leq C|x|$ for some constant $C$, it follows that
\begin{equation*}
\begin{split}
||\psi L(\varphi_\epsilon-1)||^2=&\int_{\mathbb{R}^3}|\psi(x) L\varphi\big((\frac{|x|}{\epsilon})^\epsilon\big)|^2dx\\
&=\int_{B(0,\epsilon)}|\psi(x) L\varphi\big((\frac{|x|}{\epsilon})^\epsilon\big)|^2dx
\leq C\int_{B(0,\epsilon)}|x||L\varphi\big((\frac{|x|}{\epsilon})^\epsilon\big)|^2dx.
\end{split}
\end{equation*}
We substitute $d=3$ into (\ref{central}) to obtain expression for $L\varphi\big((\frac{|x|}{\epsilon})^{\epsilon}\big)$,
\[L\varphi\big((\frac{|x|}{\epsilon})^\epsilon\big)=
\epsilon^{1-\epsilon}|x|^{\epsilon-2}\varphi'\big((\frac{|x|}{\epsilon})^\epsilon\big)+
\epsilon^{2-2\epsilon}|x|^{2\epsilon-2}\varphi''\big((\frac{|x|}{\epsilon})^\epsilon\big)+
\epsilon^{2-\epsilon}|x|^{\epsilon-2}\varphi'\big((\frac{|x|}{\epsilon})^\epsilon\big).\]
It is the same as the case $d=4$ except the coefficient for the first term goes down by $1$. 
(Later on when we discuss the case $d=2$, this term will disappear.) 
The three terms we need to work with are still $\epsilon^{1-\epsilon}|x|^{\epsilon-2}$, $\epsilon^{2-2\epsilon}|x|^{2\epsilon-2}$ and $\epsilon^{2-\epsilon}|x|^{\epsilon-2}$. 
Square these terms and multiply with $|x|$, we get 
$\epsilon^{2-2\epsilon}|x|^{2\epsilon-3}$, $\epsilon^{4-4\epsilon}|x|^{4\epsilon-3}$ and $\epsilon^{4-2\epsilon}|x|^{2\epsilon-3}$. 
Parallel to before, we integrate them over the ball $B(0,\epsilon)$, and we check the worst term to complete the proof,
\[\int_{B(0,\epsilon)}\epsilon^{2-2\epsilon}|x|^{2\epsilon-3}dx=\epsilon^{2-2\epsilon}\int_0^\epsilon r^{2\epsilon-3}r^2dr=
\epsilon^{2-2\epsilon}\int_0^\epsilon r^{2\epsilon-1}dr=\frac{\epsilon}{2}\rightarrow 0\mbox{ as }\epsilon\rightarrow 0.\]
\end{proof}

\subsection{Case d=2}

\noindent Here by Sobolev Embedding Theorem, we have $W^{2,2}(\mathbb{R}^2)\subseteq C^{\gamma}(\mathbb{R}^2)$ for any $\gamma<1$. 
Unfortunately we don't get $1$, so the increase in the H{\"o}lder exponent is not enough to compensate the loss in one power when changing into polar coordinates. 
Instead here for any $\psi\in D(\overline{L})$, we have $|\psi(x)|=|\psi(x)-\psi(0)|\leq C|x|(\ln\frac{1}{|x|})^{\frac{1}{2}}$ for some constant $C>0$, 
and for small $|x|$, see \cite[Sec. 4.1]{MT}.

\begin{proposition}
For $\mathbb{R}^2$, we have
\[D(\overline{L})=\{\psi\in W^{2,2}(\mathbb{R}^2)|\psi(0)=0\}.\]
\end{proposition}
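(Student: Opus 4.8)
The plan is to follow the template of the cases $d=4$ and $d=3$, with the twist that the Hölder bound must be replaced by its logarithmic refinement. First I would establish the inclusion $D(\overline{L})\subseteq\{\psi\in W^{2,2}(\mathbb{R}^2):\psi(0)=0\}$ exactly as in $d=3$: by the Sobolev embedding $W^{2,2}(\mathbb{R}^2)\hookrightarrow C^\gamma(\mathbb{R}^2)$ for $\gamma<1$, convergence in $W^{2,2}$ forces uniform convergence, so any limit of functions in $C_c^\infty(\mathbb{R}^2\backslash\{0\})$—each of which vanishes near the origin—must vanish at the origin. For the reverse inclusion it suffices to approximate an arbitrary $\psi\in C_c^\infty(\mathbb{R}^2)$ with $\psi(0)=0$, since such functions are dense in the target space: given $\psi$ with $\psi(0)=0$, take $\psi_k\in C_c^\infty$ with $\psi_k\to\psi$ in $W^{2,2}$, then replace $\psi_k$ by $\psi_k-\psi_k(0)\chi$ for a fixed cutoff $\chi\equiv1$ near $0$; since $\psi_k(0)\to\psi(0)=0$ this correction vanishes in $W^{2,2}$.

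I would then use the very same cutoffs $\varphi_\epsilon(x)=1-\varphi\big((|x|/\epsilon)^\epsilon\big)$ as in dimension four. The three easy terms $\|\psi\varphi_\epsilon-\psi\|$, $\|(L\psi)(\varphi_\epsilon-1)\|$ and $\|\nabla\psi\cdot\nabla(\varphi_\epsilon-1)\|$ are handled verbatim as before. The decisive term is again $\|\psi L(\varphi_\epsilon-1)\|$, and two features of $d=2$ enter here. First, in the general formula (\ref{central}) the coefficient $(d-2)$ of the most singular term $\epsilon^{1-\epsilon}|x|^{\epsilon-2}\varphi'$ vanishes, so only the two terms $\epsilon^{2-2\epsilon}|x|^{2\epsilon-2}\varphi''$ and $\epsilon^{2-\epsilon}|x|^{\epsilon-2}\varphi'$ survive. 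Second, the pointwise control on $\psi$ is no longer $|\psi(x)|^2\le C|x|$ but the logarithmic bound $|\psi(x)|^2\le C|x|^2\ln(1/|x|)$ recorded just before the statement.

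Concretely I would bound $|\psi(x)|^2\,|L\varphi_\epsilon|^2$ on $B(0,\epsilon)$ by a constant times $|x|^2\ln(1/|x|)$ times the squares of the surviving terms, namely $\epsilon^{4-4\epsilon}|x|^{4\epsilon-4}$ and $\epsilon^{4-2\epsilon}|x|^{2\epsilon-4}$ (the latter being the larger on $B(0,\epsilon)$, since their ratio is $(|x|/\epsilon)^{2\epsilon}\le1$). Passing to polar coordinates in the plane contributes a single factor $r$, so the worst integral becomes, up to constants,
\[\epsilon^{4-2\epsilon}\int_0^\epsilon r^{2\epsilon-4}\cdot r^2\ln(1/r)\cdot r\,dr=\epsilon^{4-2\epsilon}\int_0^\epsilon r^{2\epsilon-1}\ln(1/r)\,dr.\]
Using $\int_0^\epsilon r^{a-1}\ln(1/r)\,dr=\frac{\epsilon^a}{a}\big(\ln(1/\epsilon)+\tfrac1a\big)$ with $a=2\epsilon$ gives $\tfrac12\epsilon^3\ln(1/\epsilon)+\tfrac14\epsilon^2\to0$; the term coming from $\epsilon^{4-4\epsilon}|x|^{4\epsilon-4}$ (take $a=4\epsilon$) is smaller and also tends to $0$. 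This closes the argument.

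The hard part will be the logarithmic estimate: in $d=2$ the embedding just fails to reach $C^1$, so the plain Hölder modulus that sufficed in $d=3$ is too weak, and one must use the sharp modulus of continuity $|x|(\ln(1/|x|))^{1/2}$. The point is that squaring this yields the extra factor $|x|^2\ln(1/|x|)$, whose power $|x|^2$ (versus the single power $|x|$ in $d=3$) combines with the planar weight $r\,dr$ to render the log-weighted integral $\int_0^\epsilon r^{2\epsilon-1}\ln(1/r)\,dr$ convergent with decay. A secondary point worth stating carefully is the density of $\{\psi\in C_c^\infty:\psi(0)=0\}$ in $\{\psi\in W^{2,2}:\psi(0)=0\}$, which the earlier cases used implicitly.
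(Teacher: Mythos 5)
Your proposal is correct and follows essentially the same route as the paper: the same cutoffs $\varphi_\epsilon(x)=1-\varphi\big((|x|/\epsilon)^\epsilon\big)$, the same observation that the $(d-2)$ coefficient kills the worst term in (\ref{central}) when $d=2$, the same logarithmic modulus $|\psi(x)|\le C|x|(\ln\frac{1}{|x|})^{1/2}$, and the same worst-term integral $\epsilon^{4-2\epsilon}\int_0^\epsilon r^{2\epsilon-1}\ln(1/r)\,dr=\frac{\epsilon^3}{2}\ln(1/\epsilon)+\frac{\epsilon^2}{4}\to0$. Your explicit treatment of the density of $\{\psi\in C_c^\infty:\psi(0)=0\}$ in the target space, and of the forward inclusion via the Sobolev embedding, fills in details the paper leaves implicit, and both are handled correctly.
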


\begin{proof}
The terms $||\psi\varphi_\epsilon-\psi||, ||(L\psi)(\varphi_\epsilon -1)||$ and $||\nabla\psi\cdot\nabla(\varphi_\epsilon-1)||$ are estimated like before. 
Now we work on the term $||\psi L(\varphi_\epsilon-1)||$. 
As stated above, here we have 
\[|\psi(x)|\leq C|x|(\ln\frac{1}{|x|})^{\frac{1}{2}},\]
for some constant $C>0$. Then 
\[||\psi L(\varphi_\epsilon-1)||^2=\int_{\mathbb{R}^3}|\psi(x) L\varphi\big((\frac{|x|}{\epsilon})^\epsilon\big)|^2dx
\leq C^2\int_{B(0,\epsilon)}|x|^2\ln\frac{1}{|x|}|L\varphi\big((\frac{|x|}{\epsilon})^\epsilon\big)|^2dx.\]
Again as before, we substitute $d=2$ into (\ref{central}) to obtain the expression for $L\varphi\big((\frac{|x|}{\epsilon})^{\epsilon}\big)$,
\[L\varphi\big((\frac{|x|}{\epsilon})^\epsilon\big)=
\epsilon^{2-2\epsilon}|x|^{2\epsilon-2}\varphi''\big((\frac{|x|}{\epsilon})^\epsilon\big)+
\epsilon^{2-\epsilon}|x|^{\epsilon-2}\varphi'\big((\frac{|x|}{\epsilon})^\epsilon\big).\]
Note that in this dimension the first term in ($\ref{central}$), $\epsilon^{1-\epsilon}|x|^{\epsilon-2}\varphi'\big((\frac{|x|}{\epsilon})^\epsilon\big)$, is gone, 
which is good news as it was previously the biggest term. 
Because $\varphi'$ and $\varphi''$ are bounded, we estimate $\epsilon^{2-2\epsilon}|x|^{2\epsilon-2}$ and $\epsilon^{2-\epsilon}|x|^{\epsilon-2}$. 
We square them and then multiply with $|x|^2\ln\frac{1}{|x|}$ to obtain $\epsilon^{4-4\epsilon}|x|^{4\epsilon-2}\ln\frac{1}{|x|}$ and 
$\epsilon^{4-2\epsilon}|x|^{2\epsilon-2}\ln\frac{1}{|x|}$. The second term is bigger, 
\begin{equation*}
\begin{split}
\int_{B(0,\epsilon)}\epsilon^{4-2\epsilon}|x|^{2\epsilon-2}\ln\frac{1}{|x|}dx
=-\epsilon^{4-2\epsilon}\int_0^\epsilon r^{2\epsilon-1}\ln rdr
=-\frac{\epsilon^3\ln\epsilon}{2}+\frac{\epsilon^2}{4}.
\end{split}
\end{equation*}

\noindent This expression goes to $0$ as $\epsilon$ approaches $0$. This completes the proof.
\end{proof}

\subsection{Case d=1}

In this case the power of $r$ we obtained from changing into polar coordinates no longer exists, instead we have H{\"o}lder condition 
for the derivatives as compensation. This means the domain is different from the above cases as we must impose condition on the derivatives.  
Here, by Sobolev Embedding Theorem, not only $W^{2,2}(\mathbb{R})\subseteq C^{\frac{1}{2}}(\mathbb{R})$ we also have $W^{1,2}(\mathbb{R})\subseteq C^{\frac{1}{2}}(\mathbb{R})$, which means it makes sense to talk about the derivative of a function in the domain at a single point as well. 
We will skip the proof of the following proposition as it is similar to the other cases.

\begin{proposition}
For $\mathbb{R}$, we have
\[D(\overline{L})=\{\psi\in W^{2,2}(\mathbb{R})|\psi(0)=0, \psi'(0)=0\}.\] 
\end{proposition}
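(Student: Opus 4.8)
The plan is to prove $D(\overline{L})=\{\psi\in W^{2,2}(\mathbb{R})\mid\psi(0)=0,\ \psi'(0)=0\}$ by establishing both inclusions, following the template already developed in the higher-dimensional cases. For the inclusion $D(\overline{L})\subseteq\{\psi\in W^{2,2}(\mathbb{R})\mid\psi(0)=0,\ \psi'(0)=0\}$, I would argue as in the $d=3$ discussion preceding Proposition: since $W^{2,2}(\mathbb{R})\subseteq C^{1/2}(\mathbb{R})$ gives pointwise meaning to $\psi(0)$, and additionally $W^{1,2}(\mathbb{R})\subseteq C^{1/2}(\mathbb{R})$ gives pointwise meaning to $\psi'(0)$, every element of $D(\overline{L})$ is a $W^{2,2}$-limit of functions in $C_c^\infty(\mathbb{R}\backslash\{0\})$, all of which vanish together with their first derivatives near the origin; passing to the limit in the (continuous) evaluation functionals $\psi\mapsto\psi(0)$ and $\psi\mapsto\psi'(0)$ forces both to vanish.

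For the reverse inclusion, I would fix an arbitrary $\psi\in C_c^\infty(\mathbb{R})$ with $\psi(0)=0$ and $\psi'(0)=0$, reuse the cutoff $\varphi_\epsilon(x)=1-\varphi\big((\tfrac{|x|}{\epsilon})^{\epsilon}\big)$ from the $d=4$ proof, and show $\psi\varphi_\epsilon\to\psi$ in the $W^{2,2}$-norm. As in the earlier cases, the terms $\|\psi\varphi_\epsilon-\psi\|$, $\|(L\psi)(\varphi_\epsilon-1)\|$ and $\|\nabla\psi\cdot\nabla(\varphi_\epsilon-1)\|$ are handled exactly as before and pose no difficulty. The crux is again the term $\|\psi L(\varphi_\epsilon-1)\|$, where both derivatives land on the cutoff. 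I would substitute $d=1$ into the central formula \eqref{central}: the factor $d-2$ becomes $-1$, so all three terms survive, and the relevant quantities to estimate are the same $\epsilon^{1-\epsilon}|x|^{\epsilon-2}$, $\epsilon^{2-2\epsilon}|x|^{2\epsilon-2}$, $\epsilon^{2-\epsilon}|x|^{\epsilon-2}$.

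The new ingredient, and the step I expect to be the main obstacle, is extracting enough decay from $\psi$ near the origin to beat the singular weight. In $d=1$, changing to polar coordinates contributes \emph{no} extra power of $r$ (the measure on each ray is just $dr$), so unlike the $d\geq2$ cases there is no $r^{d-1}$ factor to help. The compensation must come entirely from the two vanishing conditions: since $\psi(0)=\psi'(0)=0$ and $\psi\in C^2$, Taylor's theorem gives $|\psi(x)|\leq C|x|^2$ for small $|x|$, hence $|\psi(x)|^2\leq C^2|x|^4$. I would therefore estimate $\|\psi L(\varphi_\epsilon-1)\|^2\leq C^2\int_{B(0,\epsilon)}|x|^4\,|L\varphi(\cdots)|^2\,dx$, focus on the dominant term $\epsilon^{2-2\epsilon}|x|^{2\epsilon-4}$, and compute
\begin{equation*}
\int_{-\epsilon}^{\epsilon}|x|^4\,\epsilon^{2-2\epsilon}|x|^{2\epsilon-4}\,dx
=2\,\epsilon^{2-2\epsilon}\int_0^\epsilon r^{2\epsilon}\,dr
=\frac{2\,\epsilon^{3}}{2\epsilon+1}\longrightarrow 0\quad\text{as }\epsilon\to 0.
\end{equation*}
Since the other two squared terms carry strictly higher powers of $\epsilon$ and no worse powers of $|x|$, they vanish a fortiori, completing the approximation and hence the proof. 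The subtlety to flag is precisely why one power of vanishing is not enough here: with only $\psi(0)=0$ one would have $|\psi(x)|^2\leq C|x|^2$, giving $\int_0^\epsilon r^{2\epsilon-2}\,dr$, which diverges — this is exactly why the $d=1$ domain must also impose $\psi'(0)=0$, matching the heuristic that each lost power of $r$ in the measure must be repaid by an extra order of vanishing.
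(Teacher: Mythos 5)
Your strategy is the right one, and it matches what the paper intends for this case (the paper skips the proof, saying only that it is similar to the other dimensions, with the control on the derivative as compensation for the missing $r^{d-1}$): the forward inclusion via continuity of the evaluation functionals $\psi\mapsto\psi(0)$ and $\psi\mapsto\psi'(0)$ on $W^{2,2}(\mathbb{R})$, and the reverse inclusion via the same cutoff $\varphi_\epsilon(x)=1-\varphi\big((\tfrac{|x|}{\epsilon})^{\epsilon}\big)$, with the vanishing conditions converted by Taylor's theorem into decay of $\psi$ near the origin. Your treatment of $\|\psi L(\varphi_\epsilon-1)\|$ is correct: $|\psi(x)|\le C|x|^2$ beats the worst weight $\epsilon^{2-2\epsilon}|x|^{2\epsilon-4}$ and gives $O(\epsilon^3)$.

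There is, however, a genuine gap in the sentence asserting that $\|\psi\varphi_\epsilon-\psi\|$, $\|(L\psi)(\varphi_\epsilon-1)\|$ and $\|\nabla\psi\cdot\nabla(\varphi_\epsilon-1)\|$ ``are handled exactly as before and pose no difficulty.'' That is true for the first two, but false for the gradient cross term when $d=1$. ``As before'' means bounding $|\psi'|$ by its maximum and using $|(\varphi_\epsilon-1)'(x)|\le C\epsilon^{1-\epsilon}|x|^{\epsilon-1}$ on its support $\{|x|\le\epsilon\}$, which leads to
\[
\|\psi'\,(\varphi_\epsilon-1)'\|^2\;\le\; C'\,\epsilon^{2-2\epsilon}\int_{-\epsilon}^{\epsilon}|x|^{2\epsilon-2}\,dx .
\]
In dimensions $d\ge 2$ the analogous integrand is tamed by the factor $r^{d-1}$ from polar coordinates, but in $d=1$ the integral $\int_0^\epsilon r^{2\epsilon-2}\,dr$ diverges for $\epsilon<\tfrac12$, so the step fails as written. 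This term is in fact one of the places where the second boundary condition is forced on you: invoking $\psi'(0)=0$ via Taylor, $|\psi'(x)|\le M|x|$, repairs it, since then
\[
\|\psi'\,(\varphi_\epsilon-1)'\|^2\;\le\; C''\,\epsilon^{2-2\epsilon}\int_{-\epsilon}^{\epsilon}|x|^{2}\,|x|^{2\epsilon-2}\,dx
\;=\;\frac{2C''\,\epsilon^{3}}{2\epsilon+1}\;\longrightarrow\;0 .
\]
The fix uses only tools you already deploy, but the omission also skews your closing heuristic: it is not only $\|\psi L(\varphi_\epsilon-1)\|$ but equally the cross term $\|\psi'(\varphi_\epsilon-1)'\|$ that requires $\psi'(0)=0$, and both should be cited as the reason the one-dimensional domain carries the extra condition on the derivative.
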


\section{Self-adjoint extensions of $L$}\label{opdef}

\noindent After finding the closures we are now in the position to calculate the self-adjoint extensions. 

\subsection{Case d$\geq$4}
In this case $\overline{L}$ is already self-adjoint. We know that by calculating the deficiency indices of $\overline{L}$. 
Suppose that $u\in\mathscr{K}_+$, it means, 
\[\big(u,(\overline{L} +i)f\big)=0\mbox{ for all }f\in W^{2,2}(\mathbb{R}^d).\]
In Fourier space, we have $\big(\hat{u},(|\xi|^2+i\big)\hat{f})=0$ for all $f\in W^{2,2}(\mathbb{R}^d)$. 
Then $\big((|\xi|^2-i)\hat{u},\hat{f}\big)=0$ for all $f\in W^{2,2}(\mathbb{R}^d)$. 
Since $\mathscr{F}(W^{2,2}(\mathbb{R}^d))$ is dense in $L^2(\mathbb{R}^d)$, we have $(|\xi|^2-i)\hat{u}=0$. 
It follows that $u=0$, ie $\mathscr{K}_+=\{0\}$. Similarly, $\mathscr{K}_-=\{0\}$.

\subsection{Case d=2, 3}

Here for $u\in\mathscr{K}_+$, we have 
\[\big(u,(\overline{L} +i)f\big)=0\mbox{ for all }f\in W^{2,2}(\mathbb{R}^d)\mbox{ such that }f(0)=0.\]
Again we work in Fourier space, then the condition becomes 
\[\big(\hat{u},(|\xi|^2 +i)\hat{f}\big)=0\mbox{ for all }\hat{f}\in (1+|\xi|^2)^{-1}L^2(\mathbb{R}^d)\mbox{ such that }\int_{\mathbb{R}^d}\hat{f}(\xi)d\xi=0.\]
Let $\varphi$ be a fixed function in $W^{2,2}(\mathbb{R}^d)$ such that $\int_{\mathbb{R}^d}\hat{\varphi}(\xi)d\xi=1$. 
Then for any $f\in W^{2,2}(\mathbb{R}^d)$, the function $\hat{f}-\int_{\mathbb{R}^d}\hat{f}(x)dx\hat{\varphi}$ is in $(1+|\xi|^2)^{-1}L^2(\mathbb{R}^d)$, 
and its integral over $\mathbb{R}^d$ is $0$. Hence we have 
\[\Big(\hat{u}, \big(|\xi|^2+i\big)\big(\hat{f}-\int_{\mathbb{R}^d}\hat{f}(x)dx\hat{\varphi}\big)\Big)=0.\]
That means 
\[\Big((|\xi|^2-i)\hat{u}-\big((|\xi|^2-i)\hat{u},\hat{\varphi}\big),\hat{f}\Big)=0.\]
Since $W^{2,2}(\mathbb{R}^d)$ is dense in $L^2(\mathbb{R}^d)$, we conclude that $(|\xi|^2-i)\hat{u}=c$ where $c=((|\xi|^2-i)\hat{u},\hat{\varphi})$ is a constant. 
Thus $\hat{u}=\frac{c}{|\xi|^2-i}$, so we know
$\mathscr{K}_+$ is the one dimensional complex space spanned by $\mathcal{F}^{-1}(\frac{1}{|\xi|^2-i})$. 
Similarly, $\mathscr{K}_-$ is the one dimensional complex space spanned by $\mathcal{F}^{-1}(\frac{1}{|\xi|^2+i})$.
Since both $\mathscr{K}_+$ and $\mathscr{K}_-$ are one dimensional, and $\frac{1}{|\xi|^2-i}$, $\frac{1}{|\xi|^2+i}$ have the same norms, 
the unitary maps from $\mathscr{K}_+$ onto $\mathscr{K}_-$ can be parametrized by the unit circle in the complex plane, such that for each $\theta\in[-\pi,\pi)$, 
$\mathcal{F}^{-1}(\frac{1}{|\xi|^2-i})$ is mapped to $e^{i\theta}\mathcal{F}^{-1}(\frac{1}{|\xi|^2+i})$.
Therefore we have, 

\begin{proposition}\label{dom}
The self-adjoint extensions of $L$ in $\mathbb{R}^d$, $d=2,3$, can be parametrized by a circle $\theta\in[-\pi, \pi)$, with
\[D(L_\theta)=
\{\varphi+\beta\mathcal{F}^{-1}(\frac{1}{|\xi|^2-i}) +e^{i\theta}\beta\mathcal{F}^{-1}(\frac{1}{|\xi|^2+i}):\varphi\in W^{2,2}(\mathbb{R}^d), \varphi(0)
=0\mbox{  and }\beta\in\mathbb{C}\},\]
and 
\[L_\theta(\varphi+\beta\mathcal{F}^{-1}(\frac{1}{|\xi|^2-i})+e^{i\theta}\beta\mathcal{F}^{-1}(\frac{1}{|\xi|^2+i}))
=\overline{L}\varphi+i\beta\mathcal{F}^{-1}(\frac{1}{|\xi|^2-i})-ie^{i\theta}\beta\mathcal{F}^{-1}(\frac{1}{|\xi|^2+i}).\]
\end{proposition}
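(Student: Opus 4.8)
The plan is to deduce the statement directly from the von Neumann correspondence recalled at the start of this section, applied to the closed symmetric operator $\overline{L}$. Since the preceding computation shows that $\mathscr{K}_+$ and $\mathscr{K}_-$ are each one complex-dimensional, $\overline{L}$ has equal deficiency indices, so the first Proposition applies verbatim: its self-adjoint extensions are in bijection with the unitary maps $U\colon\mathscr{K}_+\to\mathscr{K}_-$, with domain $\{\varphi+\psi+U\psi:\varphi\in D(\overline{L}),\,\psi\in\mathscr{K}_+\}$ and action $\overline{L}\varphi+i\psi-iU\psi$.

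First I would record the two inputs the formula consumes. For the domain I substitute the result of the case $d=3$ (resp.\ $d=2$), namely $D(\overline{L})=\{\varphi\in W^{2,2}(\mathbb{R}^d):\varphi(0)=0\}$. For the deficiency spaces I substitute the explicit generators found above, $\mathscr{K}_+=\mathbb{C}\cdot\mathcal{F}^{-1}(\frac{1}{|\xi|^2-i})$ and $\mathscr{K}_-=\mathbb{C}\cdot\mathcal{F}^{-1}(\frac{1}{|\xi|^2+i})$.

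Next I would classify the unitaries. As both deficiency spaces are one-dimensional, a linear map $U$ is fixed by the image of the generator and is unitary exactly when it preserves that generator's norm. Here I would verify the two generators have equal $L^2$-norm: by Plancherel this follows from the pointwise identity $\big|\,|\xi|^2-i\,\big|=\big|\,|\xi|^2+i\,\big|=\sqrt{|\xi|^4+1}$, which simultaneously confirms both generators lie in $L^2(\mathbb{R}^d)$ for $d=2,3$, since the radial integral $\int_0^\infty r^{d-1}(r^4+1)^{-1}\,dr$ converges in these dimensions. Hence the unitary maps are precisely multiplication by a unimodular constant, and writing it as $e^{i\theta}$ with $\theta\in[-\pi,\pi)$ gives the claimed circle of extensions, with $\mathcal{F}^{-1}(\frac{1}{|\xi|^2-i})\mapsto e^{i\theta}\mathcal{F}^{-1}(\frac{1}{|\xi|^2+i})$.

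Finally I would substitute. Writing the general element of $\mathscr{K}_+$ as $\psi=\beta\,\mathcal{F}^{-1}(\frac{1}{|\xi|^2-i})$ with $\beta\in\mathbb{C}$, so that $U\psi=e^{i\theta}\beta\,\mathcal{F}^{-1}(\frac{1}{|\xi|^2+i})$, the von Neumann domain and action formulas become word for word the two displays in the statement. No genuinely hard analytic step remains; the only point needing care is the bookkeeping that the norm equality together with one-dimensionality forces the unitary group to be the full circle and nothing larger, so that $\theta$ parametrizes every extension without repetition. The substantive content — the identification of $D(\overline{L})$ and of the deficiency elements — has already been carried out in the earlier subsections.
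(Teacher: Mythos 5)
Your proposal is correct and follows essentially the same route as the paper: the paper likewise obtains Proposition \ref{dom} by feeding the already-computed deficiency subspaces $\mathscr{K}_\pm=\mathbb{C}\cdot\mathcal{F}^{-1}\big(\frac{1}{|\xi|^2\mp i}\big)$ and the domain $D(\overline{L})=\{\varphi\in W^{2,2}:\varphi(0)=0\}$ into von Neumann's correspondence, using the equal norms of the two generators to identify the unitaries $\mathscr{K}_+\to\mathscr{K}_-$ with the circle $e^{i\theta}$. Your explicit checks (the pointwise identity $\big||\xi|^2-i\big|=\big||\xi|^2+i\big|$ and the convergence of $\int_0^\infty r^{d-1}(r^4+1)^{-1}\,dr$ for $d=2,3$) are details the paper only gestures at, in the text and in the remark following the proposition, so they are a welcome addition rather than a deviation.
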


\begin{remark}
Notice that $\frac{1}{|\xi|^2- i}$, $\frac{1}{|\xi|^2+i}$ are in $L^2(\mathbb{R}^d)$ iff $d<4$, 
hence they cannot be in the deficiency subspaces of $\overline{L}$ for any $d\geq 4$.
\end{remark}

\noindent Among all the self-adjoint extensions of $L$, $L_{-\pi}$ is the special one. 
We denote it by $\Delta$, and call it the Laplacian. It can be easily shown that
\[D(\Delta)=W^{2,2}(\mathbb{R}^d),\hspace{5mm}d=2,3.\]
It is the only self-adjoint extension whose domain 
is contained in the form domain of the closure of
the quadratic form $q$ associated with $L$:
\[q(\varphi,\psi)=\int_{\mathbb{R}^d} \nabla\varphi\cdot\overline{\nabla\psi}dx.\]
Note that $D(\Delta)=W^{2,2}(\mathbb{R}^d)$ is contained in $Q(\hat{q})=W^{1,2}(\mathbb{R}^d)$, 
 it follows from \cite[Sec. X.3]{RS} that $\Delta$  is the Friedrichs extension.

\vspace{5mm}

\noindent For $d=3$, let's look at another way to parametrize these self-adjoint extensions by
considering the Taylor expansions at the origin of the functions in the domains of these extensions.  
The parameter is the ratio between the constant term and the coefficient of the $\frac{1}{|x|}$ term in the expansions. 
For that, we use the following well known result, 
\begin{equation}\label{expansion}
\mathscr{F}^{-1}(\frac{1}{|\xi|^2+\lambda^2})=\frac{1}{4\pi}\frac{e^{-\lambda|x|}}{|x|},
\end{equation}
for $\lambda\in\mathbb{C}\backslash i\mathbb{R}$. From this we obtain these two expansions at the origin, 
\[\mathscr{F}^{-1}(\frac{1}{|\xi|^2-i})=\frac{1}{4\pi}\frac{e^{-\cis(-\frac{\pi}{4})|x|}}{|x|}=\frac{1}{4\pi|x|}-\frac{1}{4\pi}e^{-\frac{\pi}{4}i}+O(|x|),\]
\[\mathscr{F}^{-1}(\frac{1}{|\xi|^2+i})=\frac{1}{4\pi}\frac{e^{-\cis(\frac{\pi}{4})|x|}}{|x|}=\frac{1}{4\pi|x|}-\frac{1}{4\pi}e^{\frac{\pi}{4}i}+O(|x|).\]

\noindent We will use the above expansions in the proof of the following proposition,

\begin{proposition}\label{sec}
The self-adjoint extensions of $L$ in $\mathbb{R}^3$ can be parametrized by 
$\mu\in\mathbb{R}\cup\{\infty\}$ with 
\[D(L^\mu)=\{\varphi=\chi\beta(\frac{1}{|x|}+\mu)+\varphi': \varphi'\in W^{2,2}(\mathbb{R}^3), \varphi'(0)=0, \beta\in\mathbb{C}\},\]
where $\chi$ is a function in $C_c^\infty(\mathbb{R}^3)$ and 
$\chi\equiv 1$ near $0$, and 
\[D(L^\infty)=W^{2,2}(\mathbb{R}^3).\]
Moreover, the relationship with the previous parametrisation is
 \[L^\mu=L_\theta\mbox{ iff }\mu(\theta)=\frac{\sqrt{2}}{2}\big(\tan(\frac{\theta}{2})-1\big),\hspace{5mm}\theta\in(-\pi,\pi),\]
 and 
 \[L^\infty=L_{-\pi}=\Delta.\]
 \end{proposition}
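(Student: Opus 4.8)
The plan is to show that for each $\theta\in(-\pi,\pi)$ the domain $D(L_\theta)$ of Proposition~\ref{dom} coincides with $D(L^{\mu})$ for $\mu=\mu(\theta)$, and that $D(L_{-\pi})=D(L^\infty)=W^{2,2}(\mathbb{R}^3)$. Since every self-adjoint extension of $L$ is a restriction of $L^*$, such an extension is determined by its domain; hence equality of domains yields $L_\theta=L^{\mu(\theta)}$ as operators. Because $\theta\mapsto\tfrac{\sqrt2}{2}(\tan(\theta/2)-1)$ maps $(-\pi,\pi)$ bijectively onto $\mathbb{R}$ (as $\theta/2$ runs over $(-\pi/2,\pi/2)$, on which $\tan$ is a bijection onto $\mathbb{R}$), while $\theta=-\pi$ is sent to the extra point $\infty$, this will simultaneously show that the $L^\mu$ exhaust all self-adjoint extensions exactly once and establish the claimed correspondence.

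The heart of the argument is a local analysis at the origin. Starting from an arbitrary
\[\psi=\varphi+\beta\mathcal{F}^{-1}(\frac{1}{|\xi|^2-i})+e^{i\theta}\beta\mathcal{F}^{-1}(\frac{1}{|\xi|^2+i})\in D(L_\theta),\]
I would insert the two expansions recorded just before the statement. Since $\varphi\in W^{2,2}(\mathbb{R}^3)\subseteq C^{1/2}$ with $\varphi(0)=0$, the term $\varphi$ contributes nothing to the singular part or the constant part at the origin. Collecting terms, the coefficient of $\tfrac{1}{|x|}$ is $a=\tfrac{\beta}{4\pi}(1+e^{i\theta})$ and the constant term is $b=-\tfrac{\beta}{4\pi}(e^{-i\pi/4}+e^{i\theta}e^{i\pi/4})$. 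For $\theta\neq-\pi$ we have $a\neq 0$ whenever $\beta\neq 0$, so I set $\mu:=b/a$. Factoring $e^{i\theta/2}$ out of numerator and denominator collapses the ratio to $-\cos(\theta/2+\pi/4)/\cos(\theta/2)$, and expanding $\cos(\theta/2+\pi/4)=\tfrac{\sqrt2}{2}(\cos(\theta/2)-\sin(\theta/2))$ gives exactly $\mu=\tfrac{\sqrt2}{2}(\tan(\theta/2)-1)$.

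With $\mu$ identified, the remaining task is to verify that $\varphi':=\psi-\chi a(\tfrac{1}{|x|}+\mu)$ lies in $W^{2,2}(\mathbb{R}^3)$ with $\varphi'(0)=0$, which places $\psi$ in $D(L^\mu)$ with singular coefficient $a$; running the computation backwards (solving $a=\tfrac{\beta}{4\pi}(1+e^{i\theta})$ for $\beta$, possible since $\theta\neq-\pi$) gives the reverse inclusion. Near the origin $\chi\equiv 1$, so the singular term $\tfrac{a}{|x|}$ and the constant $a\mu=b$ are cancelled exactly, leaving a remainder that vanishes at $0$; away from the origin every ingredient is smooth and the deficiency functions decay exponentially, so $\varphi'$ is globally $W^{2,2}$. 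I expect this regularity check to be the main obstacle: one must confirm that after removing the $\tfrac{1}{|x|}$ and constant pieces the leftover behaves like $\tfrac{e^{-\lambda|x|}-1}{|x|}$, an $|x|$-type function, and that $|x|\in W^{2,2}_{\mathrm{loc}}(\mathbb{R}^3)$ (its distributional second derivatives are $O(|x|^{-1})$, which is square-integrable in three dimensions).

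Finally I would treat $\theta=-\pi$ separately: there $1+e^{i\theta}=0$, so $a=0$ and the singularity disappears. In Fourier variables the surviving combination equals $\beta\big(\tfrac{1}{|\xi|^2-i}-\tfrac{1}{|\xi|^2+i}\big)=\tfrac{2i\beta}{|\xi|^4+1}$, whose product with $|\xi|^2$ is still in $L^2$; hence the whole deficiency combination already lies in $W^{2,2}$, giving $D(L_{-\pi})=W^{2,2}(\mathbb{R}^3)=D(L^\infty)=D(\Delta)$, consistent with the value $\mu=\infty$. Together with the bijectivity of $\theta\mapsto\mu(\theta)$ noted above, this completes the identification of the two parametrisations.
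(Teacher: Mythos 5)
Your proposal is correct and follows essentially the same route as the paper: both identify $\mu$ as the ratio of the constant term to the $\tfrac{1}{|x|}$-coefficient in the expansion at the origin of the deficiency combination, and both reduce $-\bigl(e^{-i\pi/4}+e^{i(\theta+\pi/4)}\bigr)/\bigl(1+e^{i\theta}\bigr)$ to $\tfrac{\sqrt{2}}{2}\bigl(\tan(\tfrac{\theta}{2})-1\bigr)$ by the same factoring of $e^{i\theta/2}$. Your write-up is in fact more detailed than the paper's (which omits the $W^{2,2}$ regularity check for the remainder and the reverse inclusion); the only point to tighten is that for $\theta=-\pi$ the equality $D(L_{-\pi})=W^{2,2}(\mathbb{R}^3)$ also requires observing that the deficiency combination takes a non-zero value at the origin (the paper's ``non-zero constant''), so that any prescribed value $\psi(0)$ can be matched.
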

 
\begin{remark}
Note that the case $\theta=-\pi$ corresponds to functions with only constant term but no $\frac{1}{|x|}$ term in the expansion at the origin.  
The case $\theta=\frac{\pi}{2}$ corresponds to functions 
with only $\frac{1}{|x|}$ term but no constant term in the expansion at the origin. 
\end{remark}
 
\begin{proof}
We focus on the first two terms in the expansion. 
From Proposition \ref{dom} and equation (\ref{expansion}) we know 
they are a multiple of 
\[(1+e^{i\theta})\frac{1}{|x|}-(e^{-\frac{\pi}{4}i}+e^{(\theta+\frac{\pi}{4})i}).\]
For $\theta=-\pi$, the $\frac{1}{|x|}$ term disappears, and we have a non-zero constant $-\sqrt{2}$. 
This means we can get any value at the origin, which is consistent with what we already know. 
In the new parametrisation, we label this operator $L^\infty$. 

\vspace{3mm}

\noindent Then we consider $\theta\ne -\pi$, here the first two terms in the expansion are a multiple of 
\[\frac{1}{|x|}-\frac{e^{-\frac{\pi}{4}i}+e^{(\theta+\frac{\pi}{4})i}}{1+e^{i\theta}}.\]
Denote 
\begin{equation}\label{mu1}
\mu(\theta)=-\frac{e^{-\frac{\pi}{4}i}+e^{(\theta+\frac{\pi}{4})i}}{1+e^{i\theta}}=-e^{\frac{\pi}{4}i}(\frac{e^{i\theta}-i}{e^{i\theta}+1}),
\end{equation}
and in the new parametrisation this operator is denoted by $L^\mu$. 
After some calculation, we know that
\begin{equation}\label{mu2}
\mu(\theta)=\frac{\sqrt{2}}{2}\big(\tan(\frac{\theta}{2})-1\big),\hspace{5mm}\theta\in(-\pi,\pi).
\end{equation}
From this expression, we can see $\mu(\theta)$ ranges across the real line. When $\theta$ increases from $-\pi$ to $\pi$, 
$\mu(\theta)$ moves rightwards along the real line from $-\infty$ to $\infty$, 
and it changes sign when $\theta=\frac{\pi}{2}$. 
\end{proof}

\begin{remark}
The $\mu$-parametrization corresponds to the usual way self-adjoint extensions of the Laplacian on a cone are defined, 
that is, in terms of the expansions of harmonic functions at the cone point, see \cite{EM}.
\end{remark}

\section{Resolvent Kernel}\label{specfns}

\subsection{Resolvent Kernel}

\noindent From now on we focus on $\mathbb{R}^3$. 
In this section we calculate the resonances and eigenvalues of the self-adjoint extensions of $L$. 
For that we need to determine the resolvent kernel of these various self-adjoint extensions.
We start with $\Delta$. 
Since the spectrum of a self-adjoint operator is a subset of $\mathbb{R}$,
for any $\lambda\notin\mathbb{R}$, $\lambda^2$ is in the resolvent set, ie
$(\Delta-\lambda^2)^{-1}$ exists. Let's determine the kernel of $(\Delta-\lambda^2)^{-1}$ for $\lambda\in\mathbb{C}$ with $Im(\lambda)>0$. 
Suppose $(\Delta-\lambda^2)u=f$. Taking Fourier transform, we have $(|\xi|^2-\lambda^2)\hat{u}=\hat{f}$.
Hence, $\hat{u}=\frac{\hat{f}}{|\xi|-\lambda^2}$, so
\[(\Delta-\lambda^2)^{-1}f=u=\mathscr{F}^{-1}(\frac{\hat{f}}{|\xi|^2-\lambda^2})=f*\mathscr{F}^{-1}(\frac{1}{|\xi^2|-\lambda^2})=\frac{1}{4\pi}f*\frac{e^{i\lambda|x|}}{|x|}.\]
Hence we know the kernel of the operator $(\Delta-\lambda^2)^{-1}$ is 
\begin{equation}\label{free}
K_{free}(\lambda, x, y)=\frac{e^{i\lambda|x-y|}}{4\pi|x-y|}.
\end{equation}
Now we determine the kernel of $(L^\mu-\lambda^2)^{-1}$, $\mu\in(-\infty,\infty)$. 
Besides the term $K_{free}(\lambda, x, y)$, the kernel here has another term $K_{extra}(\mu, \lambda, x, y)$. 
We guess that
\[K_{extra}(\mu, \lambda, x, y)=b(\mu,\lambda)\frac{e^{i\lambda(|x|+|y|)}}{|x||y|},\]
for some $b(\mu, \lambda)$. 
With this guess, we first determine what $b(\mu, \lambda)$ must be, 
then verify it is indeed the kernel what we are after.
Denote
\[K(\mu, \lambda, x, y)=K_{free}(\lambda, x, y)+K_{extra}(\mu, \lambda, x, y)=\frac{e^{i\lambda|x-y|}}{4\pi|x-y|}+b(\mu,\lambda)\frac{e^{i\lambda(|x|+|y|)}}{|x||y|}.\]
If $K(\mu, \lambda, x, y)$ is indeed the kernel of $(L^\mu-\lambda^2)^{-1}$, it must lie in the domain of $L^\mu$ when $y$ is fixed. 
So we fix $y$ and consider the expansion of the function at $x=0$,
\[\frac{e^{i\lambda|y|}}{|y|}\Big(b(\mu,\lambda)\frac{1}{|x|}+\big(\frac{1}{4\pi}+i\lambda b(\mu,\lambda)\big)\Big).\]
When $\mu\ne\infty$, $b(\mu,\lambda)\ne 0$, and being in the domain of $L^\mu$ means
\[\frac{\frac{1}{4\pi}+i\lambda b(\mu,\lambda)}{b(\mu,\lambda)}=\mu.\]
We solve for $b(\mu,\lambda)$, 
\[b(\mu,\lambda)=\frac{i}{4\pi(\lambda+i\mu)}.\]
Therefore 
\[K_{extra}(\mu,\lambda, x, y)=\frac{ie^{i\lambda(|x|+|y|)}}{4\pi |x||y|(\lambda +i\mu)},\]
and the kernel of $(L^\mu-\lambda^2)^{-1}$ is
\begin{equation}\label{kernel}
K(\mu, \lambda, x, y)=\frac{e^{i\lambda|x-y|}}{4\pi|x-y|}+\frac{ie^{i\lambda(|x|+|y|)}}{4\pi|x||y|(\lambda+i\mu)}.
\end{equation}
We are left to verify this is the correct kernel.
Indeed, it maps $L^2(\mathbb{R}^3)$ into $D(L^\mu)$, and away from the origin, we have
\[(L^\mu-\lambda^2)K_{\mu,\lambda}f=(\Delta-\lambda^2)K_{\mu,\lambda}f=f,\]
where $K_{\mu,\lambda}$ denotes the operator corresponds to the kernel $K(\mu,\lambda, x, y)$. 
Since the origin has measure $0$, it means in $L^2(\mathbb{R}^3)$ we have
\[(L^\mu-\lambda^2)K_{\mu, \lambda}f=f,\]
which shows we have found the correct kernel. 

\subsection{Resonance}

\noindent Remember we determine the above kernel of $\lambda\in\mathbb{C}$ with $Im(\lambda)>0$. 
In this region, we have exponential decay, so the operator maps $L^2$-functions to $L^2$-functions. 
The kernel $K(\mu, \lambda, x, y)$, as a function of $\lambda$, 
clearly has a meromoprhic continuation 
to the whole complex plane $\mathbb{C}$. The continuation is defined by the same expression, so for convenience, 
we use the same name $K(\mu, \lambda, x, y)$ to denote the continuation. 
We see from (\ref{kernel}) that $K(\mu, \lambda, x, y)$ has a single pole at $\lambda=-i\mu$.
For $\mu<0$, this pole is in the physical half of the plane, and as we will see in the next subsection, its square 
is an eigenvalue of the operator $L^\mu$. 
For $\mu\geq 0$, the pole is in the non-physical half of the plane, but it still has physical significance as
shown in the next section, and in this case the pole is called a resonance. 

\begin{remark}
Note that the Laplacian $\Delta$ is the only self-adjoint extension of $L$ without either an eigenvalue or a resonance.
\end{remark}

\subsection{Eigenvalue}
\noindent 
We start with an arbitrary $\lambda$, and try to find an eigenfunction in $D(L^*)$, then determine whether it lies in the domain of any self-adjoint extension of $L$. 
Suppose $\varphi$ is an eigenfunction in $D(L^*)$ with eigenvalue $\lambda$, that means it lies in $\mbox{Ker}(L^*-\lambda)=\mbox{Ran}(\overline{L}-\lambda)^\bot$, hence 
\[\big(\varphi,(\overline{L}-\lambda)\psi\big)=0\hspace{5mm}\mbox{ for all }\psi\in D(\overline{L}).\]
We can solve the above equation similar as before. 
It has a non-trivial solution only when $\lambda$ is negative, so we know none of 
the self-adjoint extensions has a non-negative eigenvalue. 
For $\lambda<0$, the eigenspace corresponding to $\lambda$ is spanned by 
\[\mathscr{F}^{-1}(\frac{1}{|\xi|^2-\lambda})=\frac{1}{4\pi}\frac{e^{-\sqrt{-\lambda}|x|}}{|x|}=\frac{1}{4\pi|x|}-\frac{\sqrt{-\lambda}}{4\pi}+O(|x|).\]
We use equation (\ref{expansion}) to obtain the expansion at the origin in the above equation. 
By Proposition \ref{sec} we know this function is in $D(L^{-\sqrt{-\lambda}})$. 

\vspace{2mm}

\noindent From above discussion we know that for $\mu\in (-\infty, 0)$, the extension $L^\mu$ has an eigenvalue $-\mu^2$, and 
the eigenspace is spanned by the function below,
\[v_\mu(x)=\frac{e^{\mu|x|}}{|x|}.\]
Since $\mu$ is negative,  
$v_\mu$ is in $L^2(\mathbb{R}^3)$. We compute that $||v_{\mu}||_2^2=-\frac{2\pi}{\mu}$, 
so a normalised eigenfunction corresponding to the eigenvalue $-\mu^2$ is 
\[\sqrt{\frac{-\mu}{2\pi}}\frac{e^{\mu |x|}}{|x|}.\]

\vspace{3mm}
\noindent When $\mu\in [0,\infty)$, that is when $\theta\in [\frac{\pi}{2},\pi)$, 
the expression of the eigenfunction $v_\mu$ in the above case is no longer in $L^2(\mathbb{R}^3)$. 
As mentioned earlier, in this case the pole $a(\mu)=-i\mu$ is a resonance.
To summarise, the spectra of the self-adjoint extensions are
\begin{equation*}
\begin{split}
&\sigma(L_\mu)=\{-\mu^2\}\cup[0,\infty),\hspace{3mm}\mu\in(-\infty,0),\\
&\sigma(L_\mu)=[0,\infty),\hspace{3mm}\mu\in[0,\infty].
\end{split}
\end{equation*}

\section{A wave equation involving $L^\mu$}\label{res-kernels}

\subsection{The wave equation}

\noindent For $\mu\in [0,\infty)$, the resonance $-i\mu$ doesn't result in an eigenvalue, and it is in the
non-physical half of the complex plane as it means exponential growth of the kernel $K(\mu, \lambda, x, y)$.
But this resonance still has physical significance, and as we will see, it appears in the 
wave kernel involving $L^\mu$, see \cite{LP}.
The wave equation we are going to consider is
\[\begin{cases}
\partial^2_t u+ L^\mu u=0\\
u|_{t=0}=f\\
\partial_tu|_{t=0}=g,
\end{cases}\]
where $\mu\in\mathbb{R}\cup\{\infty\}$, $f, g\in C_c^\infty(\mathbb{R}^3\backslash\{0\})$. 

\noindent We know that the solution for the system 
\[\begin{cases}
\partial^2_t u+ a^2 u=0\\
u|_{t=0}=f\\
\partial_tu|_{t=0}=g,
\end{cases}\]
where $a\in\mathbb{R}$, is
\[u(t)=\cos(at)f+\frac{\sin(at)}{a}g.\]
So by functional calculus, the solution for the system we are interested in is 
\[u(t)=\cos(t\sqrt{L^\mu})f+\frac{\sin(t\sqrt{L^\mu})}{\sqrt{L^\mu}}g.\]
Here if $f\in D(L^\mu)$ and $g\in D(\sqrt{L^\mu})$, we have a strong solution, ie $u(t)\in D(L^\mu)$ for each $t$, 
and $u$ is continuous as a function of $t$ with values in $D(L^\mu)$. 

\vspace{2mm}

\noindent We proceed to calculate the kernel of $\frac{\sin(t\sqrt{L^\mu})}{\sqrt{L^\mu}}$, then 
the kernel of $\cos(t\sqrt{L^\mu})$ is given by its time derivative.

\begin{proposition} 
For any $\mu\in\mathbb{R}$, the kernel of $\frac{\sin(t\sqrt{L^\mu})}{\sqrt{L^\mu}}$ for $t\geq 0$ is
\begin{equation}\label{cker}
\frac{1}{4\pi}
\big(
\delta(t^2-|x-y|^2)
+\frac{1}{|x||y|}H(t-|x|-|y|)e^{\mu(|x|+|y|-t)}\big),
\end{equation}
where $H$ is the Heaviside function.
\end{proposition}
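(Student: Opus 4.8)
The plan is to extract the sine propagator from the explicit resolvent kernel (\ref{kernel}) by functional calculus. By the spectral theorem together with Stone's formula, for a bounded Borel function $F$ one has, on the continuous spectrum $[0,\infty)$, $F(L^\mu)=\frac{1}{2\pi i}\int_0^\infty F(s)\,[R(s+i0)-R(s-i0)]\,ds$, supplemented, when $\mu<0$, by the rank-one term $F(-\mu^2)\,P_\mu$ coming from the eigenvalue $-\mu^2$ of Section \ref{specfns}. Here $R(z)=(L^\mu-z)^{-1}$, and on setting $z=\lambda^2$ the two boundary values $R(\lambda^2\pm i0)$ are represented by $K(\mu,\pm\lambda,x,y)$, the outgoing ($+$) and incoming ($-$) continuations of (\ref{kernel}). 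Taking $F(s)=\sin(t\sqrt s)/\sqrt s$, substituting $s=\lambda^2$, and using that the resulting integrand is even in $\lambda$, the continuous part collapses to
\[ \frac{1}{\pi i}\int_{-\infty}^{\infty}\sin(t\lambda)\,K(\mu,\lambda,x,y)\,d\lambda. \]
I would then split $K=K_{free}+K_{extra}$ and treat the two pieces separately.

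For the free piece $K_{free}=e^{i\lambda|x-y|}/(4\pi|x-y|)$, writing $\sin(t\lambda)=(e^{it\lambda}-e^{-it\lambda})/2i$ turns the $\lambda$-integral into a pair of Fourier integrals producing $\delta(|x-y|\pm t)$; retaining only the forward ($t\ge 0$) support recovers the standard $\mathbb{R}^3$ forward wave propagator, i.e.\ the first, $\mu$-independent term of (\ref{cker}). For the extra piece $K_{extra}=i e^{i\lambda(|x|+|y|)}/(4\pi|x||y|(\lambda+i\mu))$, the same splitting of $\sin(t\lambda)$ yields two integrals $\int e^{i\lambda a}/(\lambda+i\mu)\,d\lambda$ with $a=t+|x|+|y|$ and $a=|x|+|y|-t$; each is evaluated by closing the contour in the half-plane where $e^{i\lambda a}$ decays and collecting the residue at the single pole $\lambda=-i\mu$. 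The sign of $a$ and the sign of $\mu$ (i.e.\ which half-plane contains the pole) dictate when the residue is picked up, and this is exactly what generates the factor $H(t-|x|-|y|)$ together with the exponential $e^{\mu(|x|+|y|-t)}$.

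The subtle point, which I expect to be the main obstacle, is the case $\mu<0$. There the pole $\lambda=-i\mu$ lies in the upper half-plane, so the continuous-spectrum integral produces, besides the desired term, spurious exponentials $e^{\mu(t+|x|+|y|)}$ and a nonzero contribution in the region $t<|x|+|y|$ where the answer should vanish. These must be cancelled by the eigenvalue contribution $F(-\mu^2)P_\mu$. Using $\sqrt{-\mu^2}=-i\mu$ and the normalized eigenfunction $\sqrt{-\mu/2\pi}\,e^{\mu|x|}/|x|$ from Section \ref{specfns}, one finds $F(-\mu^2)=\sinh(-\mu t)/(-\mu)$ and $P_\mu(x,y)=\frac{-\mu}{2\pi}\,e^{\mu(|x|+|y|)}/(|x||y|)$, so that $F(-\mu^2)P_\mu(x,y)=\frac{1}{4\pi|x||y|}\big(e^{\mu(|x|+|y|-t)}-e^{\mu(|x|+|y|+t)}\big)$. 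Adding this to the continuous-spectrum contribution, the $e^{\mu(t+|x|+|y|)}$ terms cancel in both regions, the remaining pieces combine to zero for $t<|x|+|y|$, and for $t>|x|+|y|$ they reduce to $\frac{1}{4\pi|x||y|}\,e^{\mu(|x|+|y|-t)}$. This reproduces the second term of (\ref{cker}) and explains why the formula is uniform in $\mu$: for $\mu\ge 0$ there is no eigenvalue and the residue computation already gives the answer, while for $\mu<0$ the eigenvalue term conspires to restore the identical expression.

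Finally I would dispatch the routine technical points: the oscillatory $\lambda$-integrals are interpreted distributionally, made rigorous by inserting a convergence factor $\mathrm{Im}\,\lambda=\varepsilon>0$ and letting $\varepsilon\to0^+$, and the resulting kernel, applied to $g\in C_c^\infty(\mathbb{R}^3\setminus\{0\})$, is checked to furnish the strong solution discussed above. Differentiating (\ref{cker}) in $t$ then gives the kernel of $\cos(t\sqrt{L^\mu})$, completing the description of the wave kernel.
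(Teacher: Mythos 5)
Your proposal follows the same route as the paper's proof: Stone's formula plus the eigenvalue projection for $\mu<0$, the reduction of the continuous part to $\frac{1}{\pi i}\int_{\mathbb{R}}\sin(t\lambda)\,K(\mu,\lambda,x,y)\,d\lambda$, the split $K=K_{free}+K_{extra}$, residues at $\lambda=-i\mu$ for the extra piece, and --- the point you correctly identify as the crux --- the cancellation for $\mu<0$ between the eigenvalue term $\frac{1}{4\pi|x||y|}\big(e^{\mu(|x|+|y|-t)}-e^{\mu(|x|+|y|+t)}\big)$ and the spurious residue contributions; your bookkeeping there agrees exactly with the paper's. The differences in regularization are minor: where you insert an $i\varepsilon$ convergence factor and invoke Jordan's lemma on $\int e^{i\lambda a}(\lambda+i\mu)^{-1}d\lambda$, the paper inserts a smooth spectral cutoff $\varphi(\lambda^2/R)$ and integrates by parts once, trading $(\lambda+i\mu)^{-1}$ for $(\lambda+i\mu)^{-2}$ so that the subsequent contour shifts act on absolutely convergent integrals.

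There is, however, one genuine gap: your argument as written does not cover $\mu=0$, which the proposition includes. For $\mu=0$ the pole of $K_{extra}$ sits at $\lambda=0$, on the contour of integration itself, so the prescription ``close the contour in the half-plane where $e^{i\lambda a}$ decays and collect the residue at $\lambda=-i\mu$'' is ill-defined: after you split $\sin(t\lambda)$ into $e^{\pm it\lambda}$, each of the two resulting integrals has a non-integrable singularity on the real axis, and nothing in your setup determines whether you collect the whole residue, half of it, or none. The paper treats $\mu=0$ as a separate case precisely for this reason: it keeps $\sin(t\lambda)$ intact --- note $\sin(t\lambda)/\lambda$ is smooth, so the singularity is removable --- and reduces the integral to Dirichlet integrals $\int\frac{\sin(\lambda a)}{\lambda}\,d\lambda=\pi\,\operatorname{sgn}(a)$, whose sign change across $a=t-|x|-|y|=0$ is exactly what produces the Heaviside factor $H(t-|x|-|y|)$. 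Alternatively you could interpret your split integrals as principal values and count half-residues. Either repair is short, but without one of them your proof establishes the formula only for $\mu\neq 0$.
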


\begin{remark}
We know that away from the origin, $L^\mu$ is the  same as $\Delta$. 
Also, due to finite propagation speed, the minimum time required to travel from 
$x$ to $y$ through the origin is $|x|+|y|$. 
Therefore for $t<|x|+|y|$, 
we would expect that 
$\frac{\sin(t\sqrt{L^\mu})}{\sqrt{L^\mu}}$ has the same kernel as $\frac{\sin(t\sqrt{\Delta})}{\sqrt{\Delta}}$.
Our kernel (\ref{cker}) satifies this, and is a check on its correctness.
\end{remark}

\begin{remark}
The second term in (\ref{cker}) can be interpreted as a 
``diffracted'' wave from the origin thought of as a cone point. 
The strength of the singularity is $1$ order 
weaker than the incident singularity, as is the case for a diffracted wave, 
see \cite{CT} and \cite{MW}.
\end{remark}

\begin{remark}
When $\mu<0$, the second term of (\ref{cker}) is exponentially growing in time as $t\rightarrow\infty$, 
but exponentially decaying in space as $|x|, |y|\rightarrow\infty$.
This is due to the negative eigenvalue. 
On the other hand, when $\mu>0$, this term is exponentially decaying in time as $t\rightarrow\infty$, 
but exponentially growing in space as $|x|, |y|\rightarrow\infty$. 
It corresponds to a term in the ``resonance expansion'' for solutions to the wave equation on 
a compact set, see \cite{LP} and \cite{TZ}.
\end{remark}

\begin{remark}
See \cite{LH} for a different approach to obtain the kernel by solving an auxiliary problem.
\end{remark}

\begin{proof}
\noindent We have, 
\[\frac{\sin(t\sqrt{L^\mu})}{\sqrt{L^\mu}}=\int_{-\infty}^\infty\frac{\sin(t\sqrt{\sigma})}{\sqrt{\sigma}}dP_\sigma
=\lim_{R\rightarrow\infty}\int_{-\infty}^\infty\frac{\sin(t\sqrt{\sigma})}{\sqrt{\sigma}}\varphi(\frac{\sigma}{R})dP_\sigma,\]
where $\varphi :\mathbb{R}\rightarrow [0,1]$ is a smooth function such that $\varphi=1$ on $B_1(0)$ and $\varphi=0$ outside $B_2(0)$, and 
the limit converges under the strong operator topology.
Then depending on whether $L^\mu$ has an eigenvalue, we have two possibilities. First for $\mu\in [0,\infty)$, we have
\[\frac{\sin(t\sqrt{L^\mu})}{\sqrt{L^\mu}}=\lim_{R\rightarrow\infty}\int_{-\infty}^\infty\frac{\sin(t\sqrt{\sigma})}{\sqrt{\sigma}}\varphi(\frac{\sigma}{R})dP_\sigma
=\lim_{R\rightarrow\infty}\int_0^\infty\frac{\sin(t\sqrt{\sigma})}{\sqrt{\sigma}}\varphi(\frac{\sigma}{R})dP_\sigma .\]
While for $\mu(-\infty, 0)$ the eigenvalue contributes an extra term, 
\[\frac{\sin(t\sqrt{L^\mu})}{\sqrt{L^\mu}}=\lim_{R\rightarrow\infty}\int_{-\infty}^\infty\frac{\sin(t\sqrt{\sigma})}{\sqrt{\sigma}}\varphi(\frac{\sigma}{R})dP_\sigma
=\lim_{R\rightarrow\infty}\int_0^\infty\frac{\sin(t\sqrt{\sigma})}{\sqrt{\sigma}}\varphi(\frac{\sigma}{R})dP_\sigma 
+\frac{\sin(i\mu t)}{i\mu}P_{-\mu^2},\]
where $P_{-\mu^2}$ is the orthogonal projection onto the eigenspace of $-\mu^2$.

\vspace{2mm} 

\noindent In either case we need to calculate the term $\lim_{R\rightarrow\infty}\int_0^\infty\frac{\sin(t\sqrt{\sigma})}{\sqrt{\sigma}}\varphi(\frac{\sigma}{R})dP_\sigma$. 
For each $R>0$, we apply integration by parts twice to evaluate the integral, and also by using Stone's formula, we obtain, 
\begin{equation*}
\begin{split}
&\int_0^\infty\frac{\sin(t\sqrt{\sigma})}{\sqrt{\sigma}}\varphi(\frac{\sigma}{R})dP_\sigma\\
=&\frac{1}{2\pi i}\int_0^\infty\frac{\sin(t\sqrt{\sigma})}{\sqrt{\sigma}}\varphi(\frac{\sigma}{R})
\lim_{\epsilon\downarrow 0}\big((L^\mu-\sigma-i\epsilon)^{-1}-(L^\mu-\sigma+i\epsilon)^{-1}\big)d\sigma.\\
\end{split}
\end{equation*}

\noindent Remember the kernel of $(L^\mu-\lambda^2)^{-1}$, $\mu\in\mathbb{R}\cup\{\infty\}$ is dented by $K(\mu, \lambda, x, y)$, 
hence for any function $f\in D(\sqrt{L^\mu})$ we have
\begin{equation}\label{sine}
\begin{split}
&\int_0^\infty\frac{\sin(t\sqrt{\sigma})}{\sqrt{\sigma}}\varphi(\frac{\sigma}{R})dP_\sigma\big(f(x)\big)\\
=&\frac{1}{2\pi i}\int_0^\infty\frac{\sin(t\sqrt{\sigma})}{\sqrt{\sigma}}\varphi(\frac{\sigma}{R})\int_{-\infty}^\infty
\big(K(\mu, \sqrt{\sigma}, x, y)-K(\mu, -\sqrt{\sigma}, x, y)\big)f(y)dyd\sigma\\
=&\frac{1}{\pi i}\int_{-\infty}^\infty\int_{-\infty}^\infty\sin(t\lambda)K(\mu,\lambda, x, y)\varphi(\frac{\lambda^2}{R})f(y)d\lambda dy
\hspace{5mm}(\mbox{substitute }\lambda=\sqrt{\sigma}).\\
\end{split}
\end{equation}

\noindent From before, we know that
\[K(\mu, \lambda, x, y)=K_{free}(\lambda, x, y)+K_{extra}(\mu, \lambda, x, y)
=\frac{e^{i\lambda|x-y|}}{4\pi|x-y|}+\frac{ie^{i\lambda(|x|+|y|)}}{4\pi|x||y|(\lambda+i\mu)},\]

\noindent The free resolvent kernel $K_{free}(\lambda,x,y)$
substituted to the last line of (\ref{sine}) gives the free wave kernel $\frac{1}{4\pi}\delta(t^2-|x-y|^2)$, 
so from now on we concentrate on the term contributed by $K_{extra}(\mu, \lambda, x, y)$,
\begin{equation}\label{concentrate}
\frac{1}{\pi i}\lim_{R\rightarrow\infty}\int_{-\infty}^\infty\sin(t\lambda)K_{extra}(\mu,\lambda,x,y)\varphi(\frac{\lambda^2}{R})d\lambda.
\end{equation}

\noindent We continue the computation in cases depending on the sign of $\mu$.

\vspace{2mm}

\noindent {\bf Case 1: when $\mu=0$.}

\noindent In this case equation (\ref{concentrate}) becomes
\begin{equation}\label{zerocase}
\begin{split}
&\frac{1}{4\pi^2|x||y|}\lim_{R\rightarrow\infty}\int_{-\infty}^\infty\frac{1}{\lambda}\sin(t\lambda)e^{i\lambda(|x|+|y|)}\varphi(\frac{\lambda^2}{R})d\lambda\\
=&\frac{1}{4\pi^2|x||y|}\lim_{R\rightarrow\infty}\int_{-\infty}^\infty\frac{1}{\lambda}\sin(t\lambda)\cos\big(\lambda(|x|+|y|)\big)\varphi(\frac{\lambda^2}{R})d\lambda\\
=&\frac{1}{8\pi^2|x||y|}\lim_{R\rightarrow\infty}
\Big(\int_{-\infty}^\infty\frac{1}{\lambda}\sin\big(\lambda(t+|x|+|y|)\big)\varphi(\frac{\lambda^2}{R})d\lambda\\
&\hspace{50mm}+\int_{-\infty}^\infty\frac{1}{\lambda}\sin\big(\lambda(t-|x|-|y|)\big)\varphi(\frac{\lambda^2}{R})d\lambda\Big),\\
\end{split}
\end{equation}

\noindent We then make a substitution, and split into three cases:

\begin{enumerate}
\item[\rm (i)] When $t-|x|-|y|>0$, (\ref{zerocase}) becomes
\begin{equation}\label{fc}
\begin{split}
&\frac{1}{8\pi^2|x||y|}\Bigg(\lim_{R\rightarrow\infty}\int_{-\infty}^\infty\frac{\sin\lambda}{\lambda}\varphi\Big(\frac{\lambda^2}{R(t+|x|+|y|)^2}\Big)d\lambda\\
&\hspace{50mm}+\lim_{R\rightarrow\infty}\int_{-\infty}^\infty\frac{\sin\lambda}{\lambda}\varphi\Big(\frac{\lambda^2}{R(t-|x|-|y|)^2}\Big)d\lambda\Bigg),\\
\end{split}
\end{equation}
note that we have two Dirichlet integrals, each of which equals $\pi$, so the above expression equals
\[\frac{1}{4\pi|x||y|.}\]

\item[\rm (ii)] When $t-|x|-|y|<0$, after the substitution we have the same 
expression as (\ref{fc}) except the sign of the second integral is negative. 
Hence the two integrals cancel each other, therefore in this case (\ref{zerocase}) equals $0$.

\item[\rm (iii)] When $t-|x|-|y|=0$, the second integral 
in the last line of equation (\ref{zerocase}) is $0$, 
hence we only get the first integral in expression (\ref{fc}), therefore
here (\ref{zerocase}) equals $\frac{1}{8\pi|x||y|}$.
\end{enumerate}

\noindent Combine all three cases, the kernel of $\frac{\sin(t\sqrt{L^0})}{\sqrt{L^0}}$ contributed by $K_{extra}$ for $t\geq 0$ is 
\[\frac{1}{4\pi |x||y|}H(t-|x|-|y|).\]

\noindent {\bf Case 2: when $\mu\ne 0$.}

\noindent Since $\sin(t\lambda)=\frac{e^{it\lambda}-e^{-it\lambda}}{2i}$, we have to deal with the following two terms
\begin{equation}\label{first}
\frac{-i}{8\pi^2 |x||y|}\lim_{R\rightarrow\infty}\int_{Im(\lambda)=0}\frac{e^{i\lambda(|x|+|y|+t)}}{\lambda+i\mu}\varphi(\frac{\lambda^2}{R})d\lambda,
\end{equation}
and
\begin{equation}\label{second}
\frac{i}{8\pi^2 |x||y|}\lim_{R\rightarrow\infty}\int_{Im(\lambda)=0}\frac{e^{i\lambda(|x|+|y|-t)}}{\lambda+i\mu}\varphi(\frac{\lambda^2}{R})d\lambda.
\end{equation}

\noindent We first deal with the integral (\ref{first}), 
\begin{equation}\label{parts}
\begin{split}
&\lim_{R\rightarrow\infty}\int_{Im(\lambda)=0}\frac{e^{i\lambda(|x|+|y|+t)}}{\lambda+i\mu}\varphi(\frac{\lambda^2}{R})d\lambda\\
=&\frac{1}{i(|x|+|y|+t)}\lim_{R\rightarrow\infty}\int_{Im(\lambda)=0}\frac{d}{d\lambda}(e^{i\lambda(|x|+|y|+t)})\frac{\varphi(\frac{\lambda^2}{R})}{\lambda+i\mu}d\lambda\\
=&\frac{-1}{i(|x|+|y|+t)}\lim_{R\rightarrow\infty}\int_{Im(\lambda)=0}e^{i\lambda(|x|+|y|+t)}\frac{d}{d\lambda}\big(\frac{\varphi(\frac{\lambda^2}{R})}{\lambda+i\mu}\big)d\lambda.
\end{split}
\end{equation}

\noindent The last equality is established by integration by parts. Apply the quotient rule then we get two integrals. The first one is 
\begin{equation*}
\begin{split}
\lim_{R\rightarrow\infty}\frac{2}{R}\int_{Im(\lambda)=0}\frac{\lambda\varphi'(\frac{\lambda^2}{R})e^{i\lambda(|x|+|y|+t)}}{\lambda+i\mu}d\lambda.
\end{split}
\end{equation*}

\noindent This limit is $0$, hence (\ref{parts}) equals the second integral obtained from the application of quotient rule, which is
\begin{equation*}
\begin{split}
&\frac{1}{i(|x|+|y|+t)}\lim_{R\rightarrow\infty}\int_{Im(\lambda)=0}\frac{e^{i\lambda(|x|+|y|+t)}\varphi(\frac{\lambda^2}{R})}{(\lambda+i\mu)^2}d\lambda\\
=&\frac{1}{i(|x|+|y|+t)}\int_{Im(\lambda)=0}\frac{e^{i\lambda(|x|+|y|+t)}\lim_{R\rightarrow\infty}\varphi(\frac{\lambda^2}{R})}{(\lambda+i\mu)^2}d\lambda\\
&\hspace{40mm}(\mbox{by Dominated Convergence Theorem})\\
=&\frac{1}{i(|x|+|y|+t)}\int_{Im(\lambda)=0}\frac{e^{i\lambda(|x|+|y|+t)}}{(\lambda+i\mu)^2}d\lambda.\\
\end{split}
\end{equation*}

\noindent Therefore term (\ref{first}) becomes 
\begin{equation}\label{fir}
\frac{-1}{8\pi^2 |x||y|(|x|+|y|+t)}\int_{Im(\lambda)=0}\frac{e^{i\lambda (|x|+|y|+t)}}{(\lambda+i\mu)^2}d\lambda.
\end{equation}

\noindent Now we can shift the contour $Im(\lambda)=0$ upwards to $Im(\lambda)=M$ for any $M>0$, 
and the integral should stay the same except when the contour moves across a pole. When $M\rightarrow\infty$, 
the integral approaches zero. 

\vspace{3mm}

\noindent Similarly, term (\ref{second}) becomes 
\begin{equation}\label{sec}
\frac{1}{8\pi^2|x||y|(|x|+|y|-t)}\int_{Im(\lambda)=0}\frac{e^{i\lambda (|x|+|y|-t)}}{(\lambda+i\mu)^2}d\lambda.
\end{equation}

\noindent 
In the region $t\leq |x|+|y|$, 
we can shift the contour $Im(\lambda)=0$ upwards, 
while in the region $t\geq |x|+|y|$, we can 
shift it downwards.
As before, the integral stays the same except when the contour moves across a pole, and 
the integral approaches zero when the contour is shifted further and further away.
Note that for $t=|x|+|y|$, we have a choice between shifting it upwards or downwards 
so we can always avoid the pole, hence we know the integral is $0$.
We continue the computation in two subcases:

\vspace{2mm}

\noindent {\bf Subcase 2(a): when $\mu>0$.}

\noindent In this case the pole, which is the resonance $a(\mu)=-i\mu$, lies on the negative imaginary axis. 
We shift the contour upwards for the integral (\ref{fir}), and the integral goes to $0$. 
While for the integral (\ref{sec}), it depends on the sign of $|x|+|y|-t$:
\begin{enumerate}
\item[\rm (i)] In the region $t\leq|x|+|y|$, the contour is also shifted upwards, so it also goes to $0$. Hence (\ref{concentrate}) equals $0$.
\item[\rm (ii)] In the region $t>|x|+|y|$, the contour is shifted downwards hence across the pole $-i\mu$.
The residue of the integrand at the pole $\lambda=-i\mu$ is
\[i(|x|+|y|-t)e^{\mu(|x|+|y|-t)},\]
since the winding number is $-1$, by Residue Theorem, we know (\ref{sec}), ie (\ref{concentrate}) equals
\[\frac{e^{\mu(|x|+|y|-t)}}{4\pi |x||y|}.\]
\end{enumerate}
\noindent Combine the two cases using a single expression, the kernel of $\frac{\sin(t\sqrt{L^\mu})}{\sqrt{L^\mu}}$ 
contributed by $K_{extra}$ for $\mu>0$ and $t\geq 0$ is
\begin{equation*}
\frac{H(t-|x|-|y|)e^{\mu(|x|+|y|-t)}}{4\pi |x||y|}.
\end{equation*}

\vspace{2mm}

\noindent {\bf Subcase 2(b): when $\mu<0$.}

\noindent In this case we have a negative eigenvalue and, 
\[\frac{\sin(t\sqrt{L^\mu})}{\sqrt{L^\mu}}
=\lim_{R\rightarrow\infty}\int_0^\infty\frac{\sin(t\sqrt{\sigma})}{\sqrt{\sigma}}\varphi(\frac{\sigma}{R})dP_\sigma
+\frac{\sin(i\mu t)}{i\mu}P_{-\mu^2}.\]
Let's first deal with the $\frac{\sin(i\mu t)}{i\mu}P_{-\mu^2}$ term. 
To do that we calculate the kernel of $P_{-\mu^2}$.
As discussed before, the eigenspace of $-\mu^2$ is one dimensional, and a normalised eigenfunction is 
\[v_\mu(x)=\sqrt{\frac{-\mu}{2\pi}}\frac{e^{\mu |x|}}{|x|},\]
so the kernel of $P_{-\mu^2}$ is 
\[v_\mu(x)\overline{v_\mu(y)}=\frac{-\mu e^{\mu (|x|+|y|)}}{2\pi|x||y|}.\]
Therefore the kernel of $\frac{\sin(i\mu t)}{i\mu}P_{-\mu^2}$ is
\begin{equation}\label{neg}
\frac{\sin(i\mu t)}{i\mu}\frac{-\mu e^{\mu (|x|+|y|)}}{2\pi|x||y|}
=\frac{\sinh(\mu t)}{\mu}\frac{-\mu e^{\mu (|x|+|y|)}}{2\pi|x||y|}
=\frac{-e^{\mu(|x|+|y|+t)}}{4\pi |x||y|}+\frac{e^{\mu(|x|+|y|-t)}}{4\pi |x||y|}.
\end{equation}

\noindent Now we calculate $\lim_{R\rightarrow\infty}\int_0^\infty\cos(t\sqrt{\sigma})\varphi(\frac{\sigma}{R})dP_\sigma$. 
The pole, ie the resonance $a(\mu)=-i\mu$, now lies on the positive imaginary axis, 
hence integral (\ref{fir}) may contribute some value.
The residue of the integrand at the pole $\lambda=-i\mu$ is
\[i(|x|+|y|+t)e^{\mu(|x|+|y|+t)},\]
since the winding number is $1$, by the Residue Theorem, the integral (\ref{fir}) equals
\begin{equation}\label{cancel}
\frac{e^{\mu(|x|+|y|+t)}}{4\pi |x||y|}.
\end{equation}

\noindent Again as before, to deal with (\ref{sec}) we split into two cases.

\begin{enumerate}
\item[\rm (i)] In the region $t<|x|+|y|$, the contour is shifted upwards, hence across the pole $-i\mu$. 
The residue of the integrand at the pole $\lambda=-i\mu$ is
\[i(|x|+|y|-t)e^{\mu(|x|+|y|-t)}.\]
The winding number is $1$, so by Residue Theorem, we know (\ref{sec}) equals
\begin{equation}\label{cancel2}
\frac{-e^{\mu(|x|+|y|-t)}}{4\pi|x||y|}.
\end{equation}

\noindent Adding (\ref{neg}), (\ref{cancel}) and (\ref{cancel2}) gives us $0$.

\item[\rm (ii)] In the region $t\geq |x|+|y|$, the contour is shifted downwards, so it doesn't move across any pole, 
so (\ref{sec}) is $0$ here. We add (\ref{neg}) and (\ref{cancel}) to get
\[\frac{e^{\mu(|x|+|y|-t)}}{4\pi |x||y|}.\]
\end{enumerate}
\noindent Combine above two cases into a single expression, the kernel of $\frac{\sin(t\sqrt{L^\mu})}{\sqrt{L^\mu}}$
contributed by $K_{extra}$ for $\mu<0$ and $t\geq 0$ is
\begin{equation*}
\frac{H(t-|x|-|y|)e^{\mu(|x|+|y|-t)}}{4\pi |x||y|}.
\end{equation*}

\noindent We have completed the proof.

\end{proof}

\begin{remark}
In the three cases of different signs of $\mu$, we have different expressions for the kernel for $t=|x|+|y|$. 
This is fine because we can ignore the value of the kernel on a set of measure $0$, but we do need to 
know that it is finite there so that we can be sure there is no distribution supported on this set.
\end{remark}


\begin{thebibliography}{99}

\bibitem[AGHH]{AGHH}
{S.~Albeverio, F.~Gesztesy, R.~H{\o}egh-Krohn, H.~Holden}
\newblock{\em Solvable Models in Quantum Mechanics,}
\newblock Springer-Verlag, (1988).

\bibitem[CT]{CT}
{J.~Cheeger and M. Taylor,}
\newblock {\em On the Diffraction of Waves by Conical Singularities. I,}
\newblock Comm. Pure Appl. Math. 35 (1982), no. 3, 275-331.

\bibitem[EM]{EM}
{E.~Mooers,}
\newblock {\em Heat Kernel Asymptotics on Manifolds with Conic Singularities,}
\newblock J. Anal. Math. 78 (1999), 1-36.

\bibitem[LH]{LH}
{L.~Hillairet,}
\newblock {\em Semiclassical Trace Formula for a Three-dimensional Manifold with Dirac Potential,}
\newblock Comm. Partial Differential Equations 27 (2002), no. 9-10, 1751-1791.

\bibitem[LP]{LP}
{P.~Lax and R.~Phillips,}
\newblock {\em Scattering Theory,}
\newblock Revised Edition,
\newblock Academic Press, (1989).

\bibitem[MT]{MT}
{M.~Taylor,}
\newblock {\em Partial Differential Equations: Basic Theory,}
\newblock Springer, (1996).

\bibitem[MW]{MW}
{R.~Melrose and J.~Wunsch,} 
\newblock {\em Propagation of Singularities for the Wave Equation on Conic Manifolds,}
\newblock Invent. Math. 156 (2004), no. 2, 235Ð299.

\bibitem[RBM]{RBM}
{R.~Melrose,}
\newblock {\em Geometric Scattering Theory,}
\newblock Cambridge University Press, (1995).

\bibitem[RS]{RS}
{M.~Reed and B.~Simon,}
\newblock {\em Methods of modern mathematical physics,}  
\newblock Academic Press, (1975).

\bibitem[TZ]{TZ}
{S.~Tang and M.~Zworski,}
\newblock {\em Resonance Expansions of Scattered Waves,}
\newblock Comm. Pure Appl. Math. 53 (2000), no. 10, 1305-1334.


\end{thebibliography}
\end{document}